\numberwithin{equation}{section}
\theoremstyle{plain}
\newtheorem{theorem}{Theorem}[section]
\newtheorem{cor}[theorem]{Corollary}
\theoremstyle{definition}
\newtheorem{remark}[theorem]{Remark}
\renewcommand{\P}[1]{{\mathbf{P}}^{#1}}
\title{Cubic Curves, Finite Geometry and Cryptography}
\author{A.A. Bruen, J.W.P. Hirschfeld, and D.L. Wehlau}
\date{}
\begin{document}
\maketitle

\begin{abstract}
Some geometry on non-singular cubic curves, mainly over finite fields, is
surveyed. Such a curve has $9,3,1$ or $0$ points of inflexion, and cubic
curves are classified accordingly. The group structure and the possible
numbers of rational points are also surveyed. A possible strengthening of the
security of elliptic curve cryptography is proposed using a `shared secret'
related to the group law. Cubic curves are also used  in a new way to construct
sets of points having various combinatorial and geometric properties that are
of particular interest in finite Desarguesian planes.
\end{abstract}

Keywords: Cubic curves, group law, non-singularity, elliptic curve
cryptography, finite geometries

\section{Introduction}
\label{sec1}
In cryptography, the following views of an elliptic curve over a particular
field $K$ are common:
\begin{enumerate}
\item[(i)] a curve of genus 1;

\item[(ii)] a plane non-singular cubic curve;

\item[(iii)] a plane non-singular cubic curve with an inflexion;

\item[(iv)] $\{(x,y)\mid y^2 = x^3 + ax + b  \}$.
\end{enumerate}

In this paper  (iii) is used; for many fields, it is equivalent to (iv).

However, to perform elliptic curve cryptography (ECC) on a non-singular cubic
curve it is really not necessary to assume that the curve has an inflexion.
This then widens the choice of the curve that is used for the encryption.
%
%Indeed, if one restricts to non-singular cubics with an inflection
%point then, since an inflection point has order 3, and the order of a
%subgroup divides the order of the group, one is restricted to curves whose group
%size is divisible by 3.  Thus, the number of points on the curve is divisible by 3 and thus
%one is restricting to about one third of the possible cubics available.
%
%Corrected version of sentence:
If only non-singular cubics with more than one inflexion point are
considered then, since an inflexion point other than the zero has
order 3 and the order of a subgroup divides the order of the group,
this restricts to curves whose group size is divisible by 3.

Given two irreducible curves $\cC,\,\cD$, an {\em isomorphism} from $\cC$ to
$\cD$ is an invertible polynomial transformation; it induces an isomorphism of
their function fields. A non-singular cubic curve is isomorphic to one
containing at least one inflexion point; see, for example, \cite[Section 7.10]{HKT}. Two
non-singular cubics, both with at least one inflexion, are isomorphic if and
only if there is a projective transformation between them. So to classify
non-singular cubics up to isomorphism is equivalent to classifying
non-singular cubics with an inflexion up to projective transformation.

Given a non-singular cubic with an inflexion, when the field $K$ has
characteristic other than two, co-ordinates may be chosen so that the line at
infinity contains an inflexion point and the curve is normalised to the form
$y^2 =f(x)$, where $f$ has degree 3. Canonical forms for these cubics are
given in Section \ref{subsec6.3}. However, there do exist non-singular cubic
curves having no inflexion point; see Section \ref{subsec6.4}.

Also, in this paper, a modification of the usual version of elliptic curve
cryptography is suggested. Suppose two parties $A$ and $B$ are establishing a
secret key using elliptic curve cryptography. They are working with a given
cubic curve $\cC$; it may be singular or non-singular and it may or may not
have an inflexion. It may also be noted that elliptic curve cryptography may
be carried out over any finite field using any cubic curve. In the usual
version of elliptic curve cryptography, the line at infinity is a tangent at
the inflexion $O = (0 : 1 : 0)$. The identity element for the group structure
is always chosen to be the inflexion point $O$. In the proposed variation, $A$
and $B$ share a secret. This secret, which will be digitised, is the choice of
the identity element which is known only to $A$ and $B$ and which can be any
point of the curve $\cC$. The choice of this identity point determines the
group operation. The unknown identity of the identity point then makes the
task of an eavesdropper that much more difficult. 

  In Section~\ref{appl},   some new and purely geometrical
  applications of cubic curves over finite fields are discussed.

\section{Projective plane curves}
\label{sec2}

Let $K$ be any field and let $\ov{K}$ be its  the algebraic closure. 
Let $F(X,Y,Z)$ be a form, that is, a homogeneous polynomial in
$K[X,Y,Z]$. The graph of this form, 
$$\cC= \{(x:y:z) \in
\P2(K) \mid F(x,y,z)=0\},
$$ is a {\em curve} in the projective plane
$\P2(K)$. The curve is  {\em irreducible} if $F(X,Y,Z)$ does not
factor in $\ov{K}[X,Y,Z]$.

 A point $P$ lying on a curve is a {\em singular point} of the curve if there
 is more than one tangent line to the curve through $P$, \cite[Section
 1.3]{HKT}. If no such point exists in $\P2(\ov{K})$, that is, if there is a
 unique tangent line at each point of the curve considered over $\ov{K}$, then
 the curve is {\em non-singular}. This means that, working over the algebraic
 closure of $K$, it is impossible to find a point $P$ on $\cC$ such that the
 three partial derivatives of $F$ with respect to $X,Y,Z$ are all zero at $P$.
 If a curve $\cC$ in $\P2(K)$ has a singular point in $\P2(\ov{K})$ then the
 curve $\cC$ is {\em singular}.

Geometrically, the non-singularity of $\cC$ means that it has no node or cusp
or isolated double point; so there is a unique tangent line to the curve at
every point $P$. 

\section{Inflexion points}
\label{sec3}

 A {\em point of inflexion} $P$ of a curve is one for which the tangent at $P$
 has triple contact with the curve, \cite[Section 1.3]{HKT}. Thus, in
 particular, the tangent line at an inflexion $P$ of a cubic curve has no
 other point in common with the curve. 

 The condition that the tangent line at $P$ has triple contact with the curve
 is expressed algebraically by the requirement that 
 $$
 F(X,Y,Z) = f(X,Y,Z)\cdot
 g(X,Y,Z) + (aX+bY+cZ)^3 h(X,Y,Z),
 $$ 
 where 
 \begin{enumerate}
\item[(i)]
$f(X,Y,Z)$ is the linear form
 defining the tangent line at $P$,  
\item[(ii)] $g(X,Y,Z)$ is some form of degree
 $n-1$, 
 \item[(iii)] $h(X,Y,Z)$ is a form of degree $n-3$, 
 \item[(iv)] $aX+bY+cZ$ is some linear form
 vanishing at $P$,
 \item[(v)] $n$ is the degree of the form $F$. 
\end{enumerate} 

For cubic curves, Points of inflexion are considered in relation to the group
structure in Section~\ref{grouplaw}.

Over any field, the line joining any two inflexions meets the
cubic in a third inflexion. To see this result the following
Theorem of the Nine Associated Points is used.
\begin{theorem}
\label{nine} 
 Let $\cE$ be an irreducible cubic curve defined over $K$ by
  $E$ and suppose that $\cD$ and $\cD'$  are any two other cubic 
  curves defined over $K$ by the forms $D$ and $D'$.
 If 
 \begin{eqnarray*}
 \cE\cdot \cD & = & P_1 + P_2 + \cdots + P_9, \\
 \cE\cdot \cD' & = & P_1 + P_2 + \cdots + P_8 + R,
\end{eqnarray*} 
 then $R=P_9$.
\end{theorem}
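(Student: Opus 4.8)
The plan is to fix the irreducible cubic $\cE$ and compare how the two auxiliary cubics cut it, by forming a single rational function on $\cE$ whose zeros and poles record the discrepancy between the two intersection divisors. Since $\cE$ is irreducible of degree $3$ and the hypotheses $\cE\cdot\cD=P_1+\cdots+P_9$ and $\cE\cdot\cD'=P_1+\cdots+P_8+R$ give \emph{finite} intersections, neither $\cD$ nor $\cD'$ can share a component with $\cE$; hence $E\nmid D$ and $E\nmid D'$, so the restrictions $D|_\cE$ and $D'|_\cE$ are not identically zero. As $D$ and $D'$ are both cubic forms, the quotient $\varphi=D/D'$ is homogeneous of degree $3-3=0$ and so is a genuine rational function on $\P2$, which I then restrict to $\cE$.

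The next step is to compute $\operatorname{div}(\varphi)$ on $\cE$. The zeros of $D|_\cE$ are exactly the points of $\cE\cdot\cD$ and the zeros of $D'|_\cE$ are exactly the points of $\cE\cdot\cD'$, each counted with multiplicity and each of total degree $9$ by B\'ezout. Therefore
\[
 \operatorname{div}(\varphi)=(\cE\cdot\cD)-(\cE\cdot\cD')=(P_1+\cdots+P_9)-(P_1+\cdots+P_8+R)=P_9-R,
\]
a principal divisor of degree $0$ supported at (at most) the two points $P_9$ and $R$. If $R\neq P_9$, then $\varphi$ would be a nonconstant rational function on $\cE$ with a single simple zero and a single simple pole, hence a degree-one morphism $\cE\to\P1$, which would make $\cE$ rational. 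For a non-singular cubic this contradicts its having genus $1$, and the contradiction forces $R=P_9$. Equivalently, one may invoke the group law on $\cE$ directly: a degree-zero divisor $P_9-R$ is principal only when $P_9$ and $R$ determine the same group element, i.e. when they coincide.

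The step I expect to be the real obstacle is precisely this last one in full generality, because the statement is phrased for an \emph{arbitrary} irreducible cubic, which may be singular; a singular irreducible cubic has a rational normalisation on which a degree-one map to $\P1$ does exist, so the genus argument no longer closes by itself and one must check that the points $P_i$ and $R$ avoid the singularity (or argue on the smooth locus). To cover the singular case uniformly I would instead fall back on the classical dimension count: the space of ternary cubic forms is $10$-dimensional, vanishing at a prescribed point is one linear condition, so the cubics through $P_1,\dots,P_8$ form a subspace of dimension at least two containing both $D$ and $D'$; the delicate point is then to show this subspace is exactly the pencil spanned by $D$ and $D'$, which forces $D'$ to vanish at $P_9$ and hence $R=P_9$. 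That reduction is where a degenerate configuration of the eight points—failure to impose independent conditions—would have to be ruled out, and it is the crux of a fully general argument.
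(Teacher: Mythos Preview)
Your primary argument via the principal divisor $P_9-R$ on the genus-one curve is correct for non-singular $\cE$ and is a genuinely different route from the paper's. The paper gives only the classical dimension count---precisely the argument you sketch as your fallback: the $10$-dimensional space of ternary cubic forms, the eight points imposing (asserted to be) independent linear conditions, hence a pencil through $P_1,\dots,P_8$ containing both $E$ and $D$, whence $D'=\alpha E+\beta D$ and $D'(P_9)=0$. The paper, like you, restricts to a generic situation (distinct $P_i$) and defers the fully general statement to Noether's theorem. Your divisor argument has the virtue of being self-contained for smooth $\cE$ and of handling coincident $P_i$ automatically, at the price of importing the genus computation and, as you correctly note, breaking down for singular cubics. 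The pencil argument is more elementary and treats singular $\cE$ on the same footing, but its crux is exactly the independence-of-conditions step you flag, which the paper simply asserts in the distinct-points case. One caution: your aside that ``one may invoke the group law directly'' risks circularity in this paper's logical order, since the present theorem is stated before, and used en route to, the group-law discussion.
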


\begin{proof} Here, the classical proof is given in the case that the $P_i$ 
are distinct. The general proof follows from Noether's Theorem; see 
Fulton \cite[Section 5.6]{F} or \cite[Section 4.5]{HKT}. 
  The general cubic form $F(X,Y,Z)$ has 10 coefficients. The 8 equations
  $E(P_i)=0$ for $i=1,2,\ldots,8$ impose 8 linearly independent conditions on
  the form $E$. So there is a pencil of cubics
  which pass through the 8 points $P_1,P_2,\dots,P_8$. Hence
  $D' = \ga E + \gb D$ for some $\ga,\gb \in K$.
  Since $E(P_9)= D(P_9)=0$, so $D'(P_9)=0$. Therefore $R = P_9$.
\end{proof}

\begin{theorem}
\label{flex3}
  Let $\cC$ be a cubic defined over $K$. If $P_1,P_2$ are two
  distinct inflexion points of $\cC$ lying in $\P2(K),$ and the line $\ell
  =P_1P_2$ meets $\cC$ again in $P_3,$ then $P_3$ is also an inflexion point of
  $\cC$.
\end{theorem}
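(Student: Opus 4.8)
The plan is to apply the Theorem of the Nine Associated Points (Theorem~\ref{nine}) to two judiciously chosen \emph{degenerate} cubics, exploiting the defining property of an inflexion that its tangent has triple contact with the curve. Throughout I assume $\cC$ is irreducible, as Theorem~\ref{nine} requires; this is automatic for the non-singular cubics that concern us here.

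First I would record the relevant intersections. Write $\ell_1,\ell_2$ for the tangent lines to $\cC$ at the inflexions $P_1,P_2$. Since $P_1,P_2$ are inflexions, each such tangent meets $\cC$ in no other point, so $\cC\cdot\ell_1 = 3P_1$ and $\cC\cdot\ell_2 = 3P_2$, while the hypothesis gives $\cC\cdot\ell = P_1+P_2+P_3$. Note that $\ell\neq\ell_1,\ell_2$, since the inflexional tangent at $P_i$ meets $\cC$ only at $P_i$ and so cannot pass through the other point; hence $P_3$ is genuinely distinct from $P_1,P_2$, and, being the residual intersection of the $K$-rational line $\ell$ with $\cC$, it lies in $\P2(K)$. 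Finally let $\ell_3$ be the tangent to $\cC$ at $P_3$ and write $\cC\cdot\ell_3 = 2P_3 + P_4$, where $P_4\in\P2(K)$ is the residual point (rational because $\ell_3$ is defined over $K$ and the restriction of the cubic form to $\ell_3$ is a binary cubic over $K$ with a double root at $P_3\in K$). The goal becomes to prove $P_4 = P_3$, for then $\ell_3$ has triple contact and $P_3$ is an inflexion.

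Next I would build two cubics from these lines. Let $\cD$ be the (non-reduced) cubic defined by the cube of the linear form of $\ell$, and let $\cD'$ be the cubic defined by the product of the three tangent forms of $\ell_1,\ell_2,\ell_3$. Then
\begin{eqnarray*}
\cC\cdot\cD &=& 3(P_1+P_2+P_3) \;=\; 3P_1+3P_2+3P_3,\\
\cC\cdot\cD' &=& 3P_1+3P_2+2P_3+P_4.
\end{eqnarray*}
These two divisors agree in the eight points $3P_1+3P_2+2P_3$, while their ninth points are $P_3$ and $P_4$ respectively. Applying Theorem~\ref{nine} with $\cE=\cC$ forces $P_4=P_3$, whence $\cC\cdot\ell_3 = 3P_3$ and $P_3$ is an inflexion of $\cC$.

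The one step needing care is the appeal to Theorem~\ref{nine}: the intersection divisors above are highly non-reduced, the $P_i$ occurring with multiplicities up to three, so the elementary counting argument for distinct points does not apply and one must instead invoke the general form of the theorem obtained via Noether's Theorem, exactly as flagged in its proof. Granting that, the only remaining obligations are the routine verifications that the displayed intersection multiplicities are correct and that $P_3,P_4\in\P2(K)$, both of which are immediate from the inflexion hypothesis and the rationality of $\ell$ and $\ell_3$.
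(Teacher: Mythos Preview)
Your argument is correct and follows essentially the same route as the paper: both construct the degenerate cubics $\cD=\ell^3$ and $\cD'=\ell_1\ell_2\ell_3$ and apply the Theorem of the Nine Associated Points to conclude that the residual point on $\ell_3$ is $P_3$. Your added remarks on the rationality of $P_3$ and $P_4$ and on the need for the Noether version of Theorem~\ref{nine} (since the nine points are not distinct) are apt and make explicit what the paper leaves implicit.
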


\begin{proof}
  Let $\ell_i$ be the tangent line to $\cC$ at the point $P_i,\ i=1,2,3 $, 
  and let $\cC \cdot \ell_3 = 2P_3 + R$.  Define two cubics
  $\cD=\ell^3$ and   $\cD'=\ell_1\ell_2\ell_3$.
  Then 
  \begin{eqnarray*}
  \cC\cdot\cD & = & 3P_1 + 3P_2 + 3P_3,\\ 
  \cC\cdot\cD' & = & 3P_1 + 3P_2 + 2P_3 + R.
  \end{eqnarray*}  
  So, by the previous
  theorem, $R=P_3$; that is, $\cC \cdot \ell_3 = 3P_3$ and thus $P_3$ is
  an inflexion point of $\cC$.
\end{proof}

  Given a form $F(x,y,z)$ of degree $n$, its {\em Hessian $\cH$} is defined as
  the curve given by the form $H$ that is the  determinant of the 
  second-order partial derivatives of
  $F$: $$ H = \left|
  \begin{matrix}
    F_{XX} & F_{XY} & F_{XZ}\\
    F_{YX} & F_{YY} & F_{YZ}\\
    F_{ZX} & F_{ZY} & F_{ZZ}
  \end{matrix}
   \right| \ .
   $$  Thus the Hessian is  a curve of degree $3(n-2)$.

  \begin{theorem}
  \label{hess}
    Suppose $F(X,Y,Z)\in K[X,Y,Z]$ is a form of degree $n$ and that 
    $2(n-1)$ is invertible in $K$.
    A non-singular point $P$ lying on the curve $\cC$ defined by $F$ is an 
    inflexion point of $\cC$
    if and only if its Hessian form $H$ vanishes at $P$.
  \end{theorem}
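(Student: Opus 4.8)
The plan is to reduce the inflexion condition to a statement about the restriction of the Hessian quadratic form to the tangent line, and then to evaluate the Hessian determinant in an adapted basis. Throughout I write $M$ for the symmetric matrix of second partials, so that $H = \det M$, and I identify a point $P$ with its coordinate column vector. The two tools I would use repeatedly are Euler's identity $X F_X + Y F_Y + Z F_Z = nF$ and its first derivative: differentiating gives $X F_{XX} + Y F_{XY} + Z F_{XZ} = (n-1)F_X$ together with its analogues, that is, the matrix identity $M(P)\,P = (n-1)\,\nabla F(P)$ at every point $P$ (symmetry of $M$ being automatic, since formal mixed partials commute).

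First I would parametrise the tangent line. Since $P$ is non-singular, $v := \nabla F(P) \neq 0$, and the tangent line is $\{Q : v \cdot Q = 0\}$, a two-dimensional subspace $T$ containing $P$ (because $v \cdot P = n F(P) = 0$ by Euler). Fixing $Q_0 \in T$ independent of $P$, points of the tangent line are $[\,P + \mu Q_0\,]$, and expanding $F(P + \mu Q_0)$ in $\mu$ makes the constant and linear coefficients vanish (these are $F(P)$ and $v\cdot Q_0$). Thus $P$ is an inflexion exactly when the $\mu^2$-coefficient vanishes. A direct monomial comparison shows that coefficient equals $\tfrac12\,Q_0^{\mathsf T} M(P) Q_0$; this is the one place where $2$ must be invertible, and it lets me restate the inflexion condition as $Q_0^{\mathsf T} M(P) Q_0 = 0$.

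Next I would relate this quadratic form to $\det M(P)$. Using $M(P)P = (n-1)v$ and the symmetry of $M$, the bilinear form $B(Q,Q') = Q^{\mathsf T} M(P) Q'$ satisfies $B(P, Q') = (n-1)(v \cdot Q')$; in particular $B(P,P) = 0$ and $B(P, Q_0) = 0$, so $P$ lies in the radical of $B$ restricted to $T$. Completing $\{P, Q_0\}$ to a basis $\{P, Q_0, R\}$ of $K^3$ with $R \notin T$, so that $c := v \cdot R \neq 0$, the matrix of $B$ in this basis is
\[
N \;=\; \begin{pmatrix} 0 & 0 & (n-1)c \\ 0 & d & e \\ (n-1)c & e & f \end{pmatrix},
\qquad d = Q_0^{\mathsf T} M(P) Q_0 ,
\]
and since $N = C^{\mathsf T} M(P)\, C$ for the invertible change-of-basis matrix $C$, one has $\det M(P) = (\det C)^{-2}\det N$, so $H(P)=0$ iff $\det N = 0$. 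Expanding along the first row gives $\det N = -(n-1)^2 c^2 d$, and since $(n-1)c$ is invertible this vanishes exactly when $d = 0$. Chaining the equivalences yields $H(P) = 0 \iff d = 0 \iff P$ is an inflexion.

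I expect the main obstacle to be bookkeeping rather than ideas: verifying the monomial-by-monomial identity $2\cdot(\text{the }\mu^2\text{-coefficient}) = Q_0^{\mathsf T} M(P) Q_0$ cleanly, which is what pins down the role of the factor $2$, and confirming that the hypotheses ``$2$ invertible'' and ``$(n-1)$ invertible'' enter precisely at the two places above and nowhere else, so that together they amount to $2(n-1)$ invertible. I would also check that the argument is genuinely independent of the choices of $Q_0$ and $R$; this is automatic because $\det M(P)$ is intrinsic and the inflexion property belongs to the tangent line, not to any chosen parametrisation.
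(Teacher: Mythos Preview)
Your argument is correct. The paper, however, states Theorem~\ref{hess} without proof, so there is nothing to compare against; the result is quoted as classical, with a reference to \cite[Section~11.2]{James} only for the degenerate case where $2(n-1)$ is not invertible.

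For the record, the route you take is the standard one: Euler's relation and its derivative give $M(P)P=(n-1)\nabla F(P)$, which forces the first row and column of the Hessian bilinear form, in a basis adapted to the tangent line, to have the shape you display; the determinant then collapses to $-(n-1)^2c^2\,d$ with $d=Q_0^{\mathsf T}M(P)Q_0$. One small matter of presentation: the identity you need is really
\[
2\cdot(\text{coefficient of }\mu^2\text{ in }F(P+\mu Q_0)) \;=\; Q_0^{\mathsf T}M(P)\,Q_0,
\]
which holds over $\mathbb{Z}$ for every monomial and hence for every form; writing the coefficient as $\tfrac12 Q_0^{\mathsf T}M(P)Q_0$ already presumes $2$ is a unit. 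Stating it this way makes transparent that invertibility of $2$ is used exactly once (to pass from $Q_0^{\mathsf T}M(P)Q_0=0$ back to vanishing of the $\mu^2$-coefficient), and invertibility of $n-1$ exactly once (to cancel $(n-1)^2$ in $\det N$), which together justify the hypothesis that $2(n-1)$ be invertible.
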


  \begin{remark}
    If $2(n-1)$ is not invertible in $K$ then $H$ is identically zero. For
    an appropriate treatment in this case, see \cite[Section 11.2]{James}.
  \end{remark}

  Suppose now that $\cC$ is a non-singular cubic curve. Then its Hessian $\cH$
  is also a cubic. B{\'e}zout's Theorem shows that, over an algebraically
  closed field of characteristic different from $2$ and $3$, there are, in
  general, $9$ inflexion points of $\cC$. 
  
Over the field of complex numbers these nine points form a famous
configuration, namely the 9 points of $\mathrm{AG}(2,3)$, the affine plane of
order 3. Classically this $(9_4,12_3)$ configuration of 9 points and 12 lines,
with 4 lines through a point and 3 points on a line, is called the Hesse
Configuration. Over a finite field $K= \Fq$ there are $0, 1, 3$ or $9$
inflexions. In the case of 9 inflexions, the 9 points again form a copy of
$\AG(2,3)$ embedded in the projective plane $\PG(2,q)$.

\begin{theorem}
\label{flexno}
The number of rational inflexions of a non-singular cubic over $\Fq$ is
$0,1,3,$ or $9$. The possibilities are as follows$:$
\[
\begin{array}{ll}
q\equiv 0\ ({\rm mod}\ 3): & \quad 0,\ 1,\ 3;\\
q\equiv 2\ ({\rm mod}\ 3): & \quad 0,\ 1,\ 3;\\
q\equiv 1\ ({\rm mod}\ 3): & \quad 0,\ 1,\ 3,\ 9.
\end{array}
\]
\end{theorem}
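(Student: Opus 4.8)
The plan is to pass to the algebraic closure, let the Frobenius map $\phi:(x:y:z)\mapsto(x^q:y^q:z^q)$ act on the full set of inflexions, and identify the rational inflexions with the $\phi$-fixed inflexions. The number of inflexions over $\overline{\mathbb{F}_q}$ is $9$ when $\mathrm{char}\,K\neq 3$, and is $3$ or $1$ when $\mathrm{char}\,K=3$; in the latter case this smaller count is where genuine work is needed, and it will be obtained either from the intersection of $\cC$ with its Hessian in characteristic $3$ (note that $2(n-1)=4$ is invertible there, so the Hessian criterion of Theorem~\ref{hess} still applies) or, equivalently, from the structure of the $3$-torsion in characteristic $p=3$.

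The first main step handles $\mathrm{char}\,K\neq 3$, that is $q\equiv 1,2\ (\mathrm{mod}\ 3)$. Here the nine inflexions carry the structure of $\mathrm{AG}(2,3)$, whose lines are exactly the lines of $\mathbf{P}^2$ meeting $\cC$ in three inflexions. Let $S$ be the set of rational inflexions. If $P_1,P_2\in S$, then the line $\ell=P_1P_2$ is defined over $\mathbb{F}_q$, and by Theorem~\ref{flex3} its third intersection $P_3$ with $\cC$ is again an inflexion; since $\phi$ permutes $\{P_1,P_2,P_3\}$ while fixing $P_1,P_2$, it fixes $P_3$, so $P_3\in S$. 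Thus $S$ is closed under the operation ``third point on a line'' in $\mathrm{AG}(2,3)$. The subsets of $\mathrm{AG}(2,3)$ with this closure property are precisely the affine subspaces, namely the empty set, the single points, the $12$ lines, and the whole plane, whose cardinalities are $0,1,3,9$. This already confines $|S|$ to $\{0,1,3,9\}$ and completes the case $q\equiv 1\ (\mathrm{mod}\ 3)$.

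The second step is the refinement that excludes $|S|=9$ when $q\equiv 2\ (\mathrm{mod}\ 3)$. If all nine inflexions were rational, then choosing one of them as the identity makes the whole $3$-torsion $\cC[3]\cong(\mathbb{Z}/3)^2$ rational over $\mathbb{F}_q$. The Weil pairing $e_3:\cC[3]\times\cC[3]\to\mu_3$ is non-degenerate, alternating and Galois-equivariant, so for generators $P,Q$ of $\cC[3]$ the value $e_3(P,Q)$ is a primitive cube root of unity fixed by $\phi$; hence $\mu_3\subseteq\mathbb{F}_q$ and $q\equiv 1\ (\mathrm{mod}\ 3)$, contradicting $q\equiv 2\ (\mathrm{mod}\ 3)$. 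This leaves $0,1,3$ in that case. The same pairing input, read in reverse, shows that $9$ genuinely occurs when $q\equiv 1\ (\mathrm{mod}\ 3)$, realised by a curve with full rational $3$-torsion.

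The third step treats $\mathrm{char}\,K=3$. When there is a single inflexion over the closure, it is a $\phi$-stable singleton, hence rational, giving count $1$; when there are three, Theorem~\ref{flex3} forces them to be collinear, so they form a copy of $\mathrm{AG}(1,3)$, and the closure argument above shows the rational ones form an affine subspace of this three-point line, of cardinality $0,1$ or $3$. Combining the two possibilities yields exactly $0,1,3$, as required. The main obstacle throughout is arithmetic rather than combinatorial: the purely geometric closure argument via Theorem~\ref{flex3} cheaply bounds the count by $\{0,1,3,9\}$, but both pinning down the characteristic-$3$ count over the closure and, above all, excluding nine rational inflexions when $q\equiv 2\ (\mathrm{mod}\ 3)$ rest on the arithmetic of the $3$-torsion through the Weil pairing, which is the genuinely non-formal ingredient.
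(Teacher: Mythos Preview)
The paper does not actually prove Theorem~\ref{flexno}; it is stated as a survey result, with the surrounding text supplying only Theorem~\ref{flex3}, the remark that the nine inflexions form a copy of $\mathrm{AG}(2,3)$, and the observation immediately after the theorem that the nine-point configuration $\cK_9$ in~(\ref{k9}) is written with a primitive cube root of unity $\omega\in K$. Your argument is correct and genuinely completes what the paper only gestures at. Your affine-subspace closure via Theorem~\ref{flex3} is precisely the mechanism the paper's $\mathrm{AG}(2,3)$ discussion sets up, and your characteristic-$3$ treatment through the dichotomy $E[3]\in\{0,\mathbb{Z}/3\}$ is the standard route and matches the canonical forms in Theorem~\ref{flex1}(ii). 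The real point of comparison is the exclusion of nine rational inflexions when $q\equiv 2\pmod 3$: you invoke the Weil pairing $e_3$ to force $\mu_3\subseteq\mathbb{F}_q$, while the paper's only hint is the explicit $\cK_9$, which after projective normalisation exhibits $\omega$ directly in the coordinates---an elementary but less conceptual route that presupposes the normal form of Theorem~\ref{cub9}. Your pairing argument is cleaner and explains \emph{why} the obstruction is arithmetic. One small remark: in characteristic $2$ the Hessian criterion of Theorem~\ref{hess} is unavailable since $2(n-1)=4=0$ there, so your assertion of nine inflexions over $\overline{\mathbb{F}_q}$ for $p\neq 3$ should rest directly on $E[3]\cong(\mathbb{Z}/3)^2$ rather than on the Hessian intersection; this is harmless for your argument, which only uses the count and the $\mathrm{AG}(2,3)$ structure.
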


In the case that $q\equiv 1\pmod 3$, by a suitable choice of coordinates, the
configuration of $9$ points always has the following form $\cK_9$, where $\go$
is a primitive cube root of unity in $K$:
\begin{eqnarray}
\cK_9 &=&\{(0,1,-1),(0,1,-\go), (0,1,-\go^2), (-1,0,1),(-\go,0,1),\nonumber\\
&& \qquad(-\go^2,0,1),(1,-1,0),(1,-\go,0),(1,-\go^2,0)\}\label{k9}
\end{eqnarray}
This set  $\cK_9$
is a Hessian  configuration for the non-singular cubic with form
\[
F = X^3+Y^3 +Z^2 -3c XYZ,
\]
with $c$ any element such
that $c^3 \neq 1$.
When $q=4$, take $c=0$; then $\cK_9$ is the set of rational points
of  the Hermitian  curve with form
 $$
 X \ov{X}+Y \ov{Y}+ Z\ov{Z},
 $$
where $\ov{T} = T^\sqq = T^2.$

For further illumination on inflexions of a cubic, including the singular ones,
see \cite[Chapter 11]{James}.

The advent of elliptic curve cryptography has aroused considerable interest in
elliptic curves over $\Fq$. The main idea involves a key-exchange between two
communicating parties, similar to the Diffie--Hellman protocol. There is a
publicly prescribed elliptic curve over some finite field, with associated
group $G$ that may be taken to be cyclic, with generator $P$. Communicating
parties $A,B$ choose their secret positive numbers $\ga,\gb$. Then $A$ openly
transmits the point $\ga P$, that is, $P$ added to itself $\ga$ times, to $B$.
Also, $B$ transmits $\gb P$ in the open to $A$. Now, $A$ calculates $\ga(\gb
P)$ and $B$ calculates $\gb(\ga P)$. The upshot is that $A$ and $B$ are now in
possession of a common secret key $\ga\gb P=\gb\ga P$. Security rests on the
unproved assumption that, given $mP$, it is not possible to calculate $m$ in a
`reasonable' amount of time. The commercialisation of this key-exchange has
led to an intensive study of elliptic curves over a finite field.

\section{The group law on a cubic}
\label{grouplaw}
Let $\cC$ be an irreducible cubic curve in $\P2(K)$, and consider only the
{\em rational} points of $\cC$, that is, those lying over $K$; denote this set
by $\cC(K)$. If $\cC$ is singular with singularity $P_0$, let $\cC(K)'
=\cC(K)\bsl\{P_0\}$. When $\cC$ is non-singular, write $\cC(K)' =\cC(K)$.

If $P,Q$ are points of $\cC(K)'$ then define $P*Q$ to be the third
intersection of the line $PQ$ with $\cC$. In particular, when $Q=P$, the line
$PQ$ is the tangent at $P$ and $P*P =P_t$ is the {\em tangential of $P$}. If
$P$ is an inflexion then $P_t =P$. 

Now choose any point $O$ of $\cC(K)'$ as the identity point for the group 
operation. Then   an operation $\oplus$ is defined on $\cC(K)'$
 as
follows:
\begin{equation}
   P\oplus Q= (P*Q)*O.
\end{equation}
The negative of $\oplus$ is written $-$.

\begin{figure}[ht]
\begin{center}
\begin{picture}(1,1)
%\put(30,130){$P_{0}$}
\put(28,128){$O$}
\put(95,117){$P\oplus Q$}
\put(182,97){$P*Q$}
\put(88,72){$Q$}
\put(2,48){$P$}
\put(44,124.5){\circle*{3}}
\put(94,112.5){\circle*{3}}
\put(176,93){\circle*{3}}
\put(99,70){\circle*{3}}
\put(15.5,45.5){\circle*{3}}
\end{picture}
\scalebox{.5}{\includegraphics{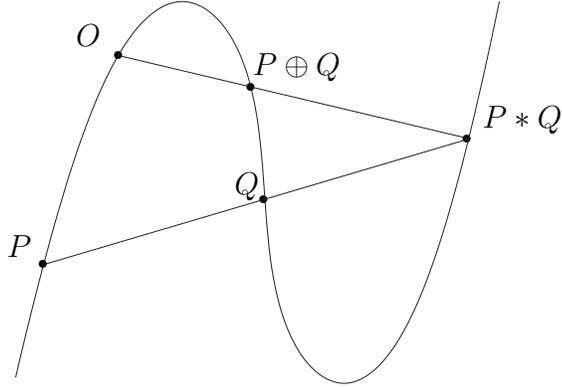}}
\end{center}
\caption{Abelian group law on an elliptic curve}
\label{fig112}
\end{figure}

Let the tangential $O_t$ at $O$ be denoted by  $N$.
\begin{theorem}
\label{group}
\begin{enumerate}
\item[\rm(i)]  The points of $\cC(K)'$ form a group $G$ with identity 
$O$ under the operation
$\oplus$.
\item[\rm(ii)] 
$-N=N$.
\item[\rm(iii)]\label{line1} 
Three points $P,Q,R$ of $\cC(K)'$ are collinear if and only if
$P \oplus Q \oplus R =N.$

\item[\rm(iv)] If $O$ is an inflexion$,$ then   three points 
$P,Q,R$ of $\cC(K)'$ are collinear if and only if
$P \oplus Q \oplus R =O.$
\end{enumerate}

\end{theorem}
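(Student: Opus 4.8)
The plan is to prove (i) first and then read (ii)--(iv) off the resulting dictionary between collinearity and $\oplus$. For (i), commutativity is immediate from $P*Q=Q*P$, and $O$ is a two-sided identity because $P\oplus O=(P*O)*O=P$: the line through $P$ and $P*O$ is the line $PO$, whose third point (after $P*O$ and $O$) is $P$. To produce inverses I would first record the key identity $N*O=O$. Since $N=O*O$ is the third intersection of the tangent at $O$, that tangent is exactly the line $ON$, meeting $\cC$ in $O,O,N$; removing $N$ and one copy of $O$ leaves $O$, so $N*O=O$. Consequently, for any $P$, using that $P,N,P*N$ are collinear,
\[
P\oplus(P*N)=\bigl(P*(P*N)\bigr)*O=N*O=O,
\]
so $-P=P*N$. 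Thus everything except associativity is routine.

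Associativity is the main obstacle, and I would settle it with the Theorem of the Nine Associated Points (Theorem~\ref{nine}). Write $S=P*Q$, $A=P\oplus Q=S*O$, $S'=Q*R$, $B=Q\oplus R=S'*O$, and $X=A*R$, and introduce two auxiliary cubics, each a product of three lines,
\[
\cD_1=(PQ)\,(RA)\,(OS'),\qquad \cD_2=(QR)\,(OS)\,(PB).
\]
Reading off the intersections line by line gives $\cC\cdot\cD_1=P+Q+S+R+A+X+O+S'+B$ and $\cC\cdot\cD_2=Q+R+S'+O+S+A+P+B+(P*B)$. These divisors share the eight points $P,Q,R,S,S',O,A,B$, so Theorem~\ref{nine} forces the ninth to coincide: $X=P*B$. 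Hence
\[
(P\oplus Q)\oplus R=(A*R)*O=X*O=(P*B)*O=P\oplus(Q\oplus R).
\]
The delicate point is the invocation of Theorem~\ref{nine}: one must check the six lines are genuinely distinct and the nine points carry the correct multiplicities, the coincidences being absorbed by the Noether-based form quoted there.

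With (i) in hand, (iii) is quick. If $P,Q,R$ are collinear then $R=P*Q$, so $P\oplus Q=R*O$, and since $R,O,R*O$ are collinear, $(P\oplus Q)\oplus R=\bigl((R*O)*R\bigr)*O=O*O=N$; thus $P\oplus Q\oplus R=N$. Conversely, setting $R_0=P*Q$ produces a collinear triple with $P\oplus Q\oplus R_0=N$ by what was just shown, so $R=R_0$ by cancellation in $G$ and $P,Q,R$ are collinear. Part (iv) is the specialisation in which $O$ is an inflexion: then $N=O*O=O$, so the criterion of (iii) becomes $P\oplus Q\oplus R=O$. Finally, for (ii) I would combine the inverse law $-P=P*N$ with the collinearity dictionary: specialising to $P=N$ gives $-N=N*N$, which the relation (iii) should then identify with $N$. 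The inflexion case is transparent, since there $N=O$ forces $-N=-O=O=N$.
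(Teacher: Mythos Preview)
The paper states Theorem~\ref{group} without proof, so there is no argument to compare against; what follows is an assessment of your proposal on its own merits.

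Your treatment of (i), (iii) and (iv) is correct and is the standard route. The identity and inverse computations are clean (the key facts $N*O=O$ and $-P=P*N$ are exactly what one needs), and deducing associativity from Theorem~\ref{nine} via the two triple-line cubics $\cD_1,\cD_2$ is the classical argument; your caveat about coincidences and multiplicities is the right one, and the paper's own reference to the Noether-based form of Theorem~\ref{nine} covers it. The derivation of (iii) from (i) and of (iv) from (iii) is fine.

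The gap is in (ii). You obtain $-N=N*N=N_t$ and then write that ``relation (iii) should then identify [this] with $N$'', but (iii) does not do that: applying (iii) to the tangent at $N$ gives $2N\oplus N_t=N$, i.e.\ $2N\oplus(-N)=N$, which is a tautology and yields no information about $-N$. In fact the assertion $-N=N$ is not true for an arbitrary choice of $O$. If one compares $\oplus$ with a group law $+$ based at an inflexion $O_0$, then $P\oplus Q=P+Q-O$ and $N=O_t=-2O$; hence $(-N)_{\oplus}=2O-N=4O$, so $-N=N$ holds iff $6O=O_0$, equivalently iff $N$ itself is an inflexion. Your hedged phrasing (``should then identify'') is thus masking a step that cannot be completed in general. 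Either (ii) is intended under an extra hypothesis (e.g.\ $O$ an inflexion, which makes $N=O$ and the claim trivial, and is in any case subsumed by (iv)), or it is misstated; in either case your argument for (ii) does not close, and you should flag this rather than paper over it.
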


If the characteristic of $K$ is not 2 or 3, then, as in Theorem \ref{flex1}, 
$\cC$ may be given by the form
$ F= Y^2Z - X^3 - c X Z^2 - d Z^3$. 
 
With the identity point $O=(0:1:0)$   the group law may be 
 expressed algebraically as follows.
%Clearly $P+O=P$ for all $P \in \cC$.
With $P=(x_1:y_1:1)$ and $Q=(x_2:y_2:1)$,
$$
P\oplus Q= \left\{
\begin{array}{l}
    (0:1:0), \hspace{5.5cm} \mbox{if  $x_1 = x_2$ and  $y_1 \neq y_2$,}\\
    (\g^2-x_1-x_2:  -\g^3+2\g x_1 + \g x_2-y_1: 1),  \mbox{\quad otherwise},
\end{array}\right.
$$
where
$$\g= 
\begin{cases}
3x_1^2 + a/(2y_1), & \text{if } x_1=x_2,\\
(y_2-y_1)/(x_2-x_1), & \text{if } x_1 \neq x_2;
\end{cases} 
$$
see \cite[Section 6.6]{BF}.

Part (iii) of Theorem \ref{group} can be generalised to curves of
higher degree.

\begin{theorem} %(2.3)
\label{thm2.4}
\begin{enumerate}
\item[\rm(i)] \label{2.4b} The six distinct points $P,Q,R,S,T, U$ of 
$\cC(K)'$ lie on a conic if and only if
$P\oplus Q \oplus R \oplus S \oplus T \oplus  U =2N$.

\item[\rm(ii)] A set of $3m$ points $P_1,P_2,\dots,P_{3m}$ of  $\cC(K)'$
lie on a curve of order $m$ if and only if $\sum_{i=1}^m P_i= mN$. 

\end{enumerate}

\end{theorem}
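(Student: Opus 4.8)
The plan is to reduce both parts to the collinearity criterion of Theorem~\ref{group}(iii) by linearising the group law through divisor classes. Write $\ell$ for the divisor any line cuts on $\cC$; since the tangent at $O$ meets $\cC$ in $2O+N$, we have $N\sim\ell-2O$ in the class group. Consider the map $\phi$ sending a point $P$ of $\cC(K)'$ to the class of $P-O$ in $\mathrm{Pic}^0(\cC)$. Theorem~\ref{group}(iii) is exactly the statement that $\phi$ takes $\oplus$ to addition of classes, and on a non-singular cubic (or the smooth locus of a singular one) it is the classical isomorphism onto $\mathrm{Pic}^0(\cC)$. A short computation then shows that, for $3m$ points, $\bigoplus_{i=1}^{3m}P_i=mN$ holds in the group if and only if $\sum_{i=1}^{3m}P_i\sim m\ell$ as divisors (the sum in part (ii) being taken over all $3m$ points). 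Thus both statements reduce to the single assertion that $3m$ points form the intersection divisor of a degree-$m$ curve with $\cC$ precisely when that divisor lies in the class $m\ell$.

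One direction is immediate. If the points are cut out on $\cC$ by a form $G$ of degree $m$, compare $G$ with a product $L$ of $m$ linear forms in general position: the rational function $G/L$ on $\cC$ exhibits $\sum_iP_i$ as linearly equivalent to the divisor cut by the $m$ lines, which splits into $m$ collinear triples. Each triple $\oplus$-sums to $N$ by Theorem~\ref{group}(iii), so $\phi$ gives $\bigoplus_iP_i=mN$. The reverse direction is the substantive one and I expect it to be the main obstacle: one must show that a divisor in the class $m\ell$ is genuinely cut out by some degree-$m$ curve. A naive count is inconclusive, since the space of degree-$m$ forms has dimension $\binom{m+2}{2}$ and passage through $3m$ points imposes $3m$ conditions, which fail to be independent by exactly the right amount precisely because of the linear-equivalence hypothesis. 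I would obtain this surjectivity from Riemann--Roch on the genus-one curve $\cC$, or equivalently from the residuation (Noether) form of Theorem~\ref{nine}; see \cite[Section 5.6]{F}.

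For part (i) ($m=2$) I would in addition give a self-contained argument in the synthetic spirit of Theorem~\ref{flex3}, avoiding Riemann--Roch. For the forward direction, put $A=P*Q$, $B=R*S$, $C=T*U$, so that $P\oplus Q\oplus A=R\oplus S\oplus B=T\oplus U\oplus C=N$. Let $\mathcal{Q}$ be the conic through the six points and let the line $AB$ meet $\cC$ again at $C'=A*B$. The two cubics $(PQ)(RS)(TU)$ and $\mathcal{Q}\cdot(AB)$ cut on $\cC$ divisors that agree in the eight points $P,Q,R,S,T,U,A,B$, so Theorem~\ref{nine} forces $C=C'$; hence $A,B,C$ are collinear and $A\oplus B\oplus C=N$. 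Adding the first three relations gives $P\oplus Q\oplus R\oplus S\oplus T\oplus U\oplus(A\oplus B\oplus C)=3N$, and substituting the fourth yields $P\oplus Q\oplus R\oplus S\oplus T\oplus U=2N$. For the converse, take a conic through $P,Q,R,S,T$ (one exists, and since $\cC$ is irreducible no line contains four of the points); it meets $\cC$ in a sixth point $U'$, the forward direction gives $P\oplus\cdots\oplus U'=2N$, and comparison with the hypothesis forces $U'=U$. Degenerate configurations, where auxiliary points coincide or lines merge, are absorbed into the general form of Theorem~\ref{nine}.
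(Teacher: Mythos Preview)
The paper does not actually prove Theorem~\ref{thm2.4}; it is stated without proof as a generalisation of Theorem~\ref{group}(iii), and the exposition then moves directly to the table of geometric/group-theoretic correspondences. So there is no in-paper argument to compare your proposal against.

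On its own merits your proposal is sound and is the standard line of attack. The translation via $\phi\colon P\mapsto[P-O]$ into $\mathrm{Pic}^0(\cC)$, together with $2O+N\sim\ell$, correctly reduces the claim to: $3m$ points are cut out on $\cC$ by a degree-$m$ curve if and only if their formal sum is linearly equivalent to $m\ell$. The forward direction is immediate as you say, and you correctly isolate the converse as the substantive step, supplying it via Riemann--Roch or, equivalently, the Noether residuation that underlies the general form of Theorem~\ref{nine} (which the paper cites from \cite[Section~5.6]{F}). Your self-contained synthetic argument for $m=2$ is clean and exactly in the spirit of the paper's proof of Theorem~\ref{flex3}: the two auxiliary cubics $(PQ)(RS)(TU)$ and $\mathcal{Q}\cdot(AB)$ share eight intersections with $\cC$, so Theorem~\ref{nine} forces $A,B,C$ collinear, and the bookkeeping with $N$ is correct. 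You also rightly flag the typo in part~(ii): the displayed sum should range over all $3m$ points.

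One small point worth making explicit in the converse of~(i): if three of $P,Q,R,S,T$ happen to be collinear on $\cC$, the conic through the five points is forced to be a line pair, and the sixth intersection may coincide set-theoretically with one of the five (for instance if the two lines of the pair meet on $\cC$). Your catch-all appeal to the multiplicity-aware form of Theorem~\ref{nine} is the right disposal, but since the theorem as stated assumes six \emph{distinct} points, you should say in one sentence why the degenerate conic still cuts $\cC$ in a divisor of the form $P+Q+R+S+T+U'$ with $U'$ a genuine sixth point, or else bypass the issue by choosing the five points so that no three are collinear (always possible once all six are distinct on an irreducible cubic, by relabelling).
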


For cubics, geometric results have algebraic counterparts. Here is a sample
from \cite{L}, originally for the complex numbers, but applicable over any
field.
\begin{center}
\begin{tabular}{|l|l|}
\hline
&\\
&\\[-7mm]
\mcol{1}{|c|}{\bf Geometric formulation} &
\mcol{1}{|c|}{\bf Group-theoretic formulation}\\
&\\
&\\[-7mm]
\hline
&\\
&\\[-7mm]
$P$ and $Q$ have the same tangential  &     $2P=2Q$  or $2(P-Q)=0$\\
&\\
&\\[-7mm]
\hline
&\\
&\\[-7mm]
There exist four tangents from $P$ &    $2X \oplus P=N$ has four solutions\\ 
&\\
&\\[-7mm]
\hline
&\\
&\\[-7mm]
$P$ is a inflexion &    $3P=N$\\ 
&\\
&\\[-7mm]
\hline
&\\
&\\[-7mm]
$\cC$ has 9 inflexions & $3P=N$ has nine solutions\\
&\\
&\\[-7mm]
\hline
&\\
&\\[-7mm]
If $P$ and $Q$ are inflexions then           & If  $3P=N$ and  $3Q=N$, then\\
$R=P*Q$ is another inflexion;           & $P\oplus Q\oplus R=N$ implies $3R=N$\\
if $P\neq Q$ then $R \neq P,Q$          &     \\[3mm]
\hline
\end{tabular}
\end{center}

The calculations become more familiar, but not less complicated, if the point
$O$ is in fact an inflexion point. It is important to note that all different
choices for $O$ yield isomorphic groups.

\section{Classification of singular cubics}
\label{sec5}

An irreducible cubic $\cC$ over $K$ with a singular point $P_0$ has $2,1$ or
$0$ tangents lying over $K$ at $P_0$, which is correspondingly a {\em node},
{\em cusp} or {\em isolated double point}. 
Let $\cN_i^j$ indicate an 
irreducible singular cubic over $\Fq$ with $i$ rational inflexions and $j$
distinct rational tangents at the singularity; here, `rational' means 
`over $K$'.

When the characteristic of $K$ is $3$, there is one cubic $\cC=\cN_q^1$ of
particular note. It has the associated canonical form $F=ZY^2 - X^3$ and {\em
every} point of $\cN_q^1$ in $\P2(K)$ other than the singular point $P_0=(0:0:1)$
is an inflexion.

\begin{theorem}
\label{sing}
\begin{enumerate}
\item[\rm(i)] For an irreducible singular plane cubic curve $\cC$ over $\Fq$, 
with $\cC\neq\cN_q^1$,
\begin{enumerate}
\item[\rm(a)] there are $3$ collinear inflexions over $\ov{K};$
\item[\rm(b)] the inflexions are rational or lie
over a quadratic extension or a cubic extension.
\end{enumerate}

\item[\rm(ii)] For any $q$ there are
precisely four projectively distinct singular cubics. 
\end{enumerate}
\end{theorem}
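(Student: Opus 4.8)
The plan is to treat parts (i) and (ii) separately, using the Hessian to count inflexions over $\ov{K}$, Theorem \ref{flex3} to get collinearity, a Galois-orbit argument for the field of definition, and a normal-form enumeration for the final count. First I would record the two structural facts that drive everything. An irreducible cubic $\cC$ has a \emph{unique} singular point $P_0$: if there were two, the line joining them would meet $\cC$ with intersection multiplicity at least $2+2=4>3$, so by B\'ezout it would be a component of $\cC$, contradicting irreducibility. The tangent cone at $P_0$ is a binary quadratic form, and its factorisation over $\ov{K}$ yields the trichotomy underlying the notation $\cN_i^j$: two distinct $K$-rational lines (node, $j=2$), two conjugate lines over a quadratic extension (isolated double point, $j=0$), or a repeated line (cusp, $j=1$). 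Throughout I write $\cC(K)'$ for the group of smooth points furnished by Theorem \ref{group}.

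For part (i)(a) I would argue over $\ov{K}$. By Theorem \ref{hess} (valid since $2(n-1)=4$ is invertible when $\mathrm{char}\,K\neq 2$), the smooth points of $\cC\cap\cH$ are exactly the inflexions, and since $\cC$ and its Hessian $\cH$ are both cubics sharing no component, B\'ezout gives $9$ intersection points with multiplicity. The singular point $P_0$ lies on $\cH$, and a local computation of the intersection number there gives $I_{P_0}(\cC,\cH)=6$ in the node or acnode case and $8$ in the cusp case (for $\mathrm{char}\,K\neq 2,3$). Hence there remain $9-6=3$ inflexions in the nodal cases, and only $9-8=1$ for the cusp; so the count ``$3$'' in the statement is the generic nodal situation, the cusp $\cN_1^1$ carrying its single rational inflexion. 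Collinearity of the three is then immediate from Theorem \ref{flex3} applied over $\ov{K}$: given two of the inflexions, the third point where their chord meets $\cC$ is again an inflexion, hence the remaining one. (Equivalently, placing the identity $O$ at an inflexion makes the inflexions the $3$-torsion subgroup $\{O,T,2T\}$, whose sum $3T=O=N$ is precisely the collinearity value of Theorem \ref{group}(iv).)

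For part (i)(b) I would observe that the inflexion set is cut out over $K$, being $\cC\cap\cH$ with $P_0$ removed and both $\cC,\cH$ defined by forms over $K$; therefore $\mathrm{Gal}(\ov{K}/K)$ permutes the three inflexions. A permutation of a $3$-element set has orbit sizes summing to $3$, so the partition is $1+1+1$, $1+2$, or $3$. In the first case all inflexions are rational, in the second one is rational and the other two lie in a quadratic extension, and in the third the three form a single cyclic orbit and lie in a cubic extension. This is exactly alternative (b), and the $3$-cycle case (with no rational inflexion) is what forces the ``cubic extension'' possibility to be listed.

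For part (ii) I would reduce each $\cC$ to a canonical form: move $P_0$ to a standard point and normalise the tangent cone, reaching $Y^2Z=X^3+aX^2Z$ when a rational inflexion is available, with $a=0$ (cusp), $a$ a nonzero square (node, $j=2$), and $a$ a nonsquare (acnode, $j=0$), where scaling $X,Y$ shows $a$ matters only modulo squares; the $i=0$ curves, having no rational inflexion, instead need a twisted normal form built from the rational parametrisation. The projective class is detected by the pair $(i,j)=$ (number of rational inflexions, tangent type), which is visibly preserved by $K$-projective maps, so \emph{distinctness} of classes with different invariants is free. Using the parametrisation of $\cC(K)'$ as $\ov{K}^{*}$, its nonsplit twist, or the additive group, and tracking the inflexion coset, one checks that exactly four pairs are realisable for each $q$: for $q\equiv 1\pmod 3$ they are $\cN_1^1,\cN_3^2,\cN_0^2,\cN_1^0$, and for $q\equiv 2\pmod 3$ they are $\cN_1^1,\cN_1^2,\cN_3^0,\cN_0^0$, the split/nonsplit torus alternately contributing two classes according to whether cubing on its $K$-points is bijective or three-to-one. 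The main obstacle is the completeness step: proving that two singular cubics with the same $(i,j)$ are genuinely $K$-projectively equivalent, i.e.\ that no projective modulus survives the normalisation. This is where the parametrisation of the smooth locus and a careful count of cube classes (governed by whether $3\mid q-1$ or $3\mid q+1$) do the real work, and where the characteristics $2$ and $3$ must be handled with their own canonical forms, the cusp in characteristic $3$ being the excluded curve $\cN_q^1$.
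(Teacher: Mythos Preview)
The paper gives no proof of this theorem; it is quoted as a survey result with the reader referred to \cite[Section 11.4]{James} for the isolated--double--point canonical forms and to Table~\ref{cubsing} for the nodal and cuspidal ones. So there is nothing to compare against, and your outline stands on its own.

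On the substance: your strategy for (i) is sound. The uniqueness of the singular point, the Hessian--B\'ezout count, and the Galois--orbit argument are the standard ingredients, and you correctly flag that the cusp in characteristic $\neq 3$ yields only one inflexion, not three, so that the literal phrase ``$3$ collinear inflexions over $\ov K$'' really applies to the nodal (equivalently, over $K$, nodal or isolated--double--point) case. Two places deserve more care. First, in characteristic $2$ the Hessian of a cubic vanishes identically (the Remark after Theorem~\ref{hess}), so the $9$--point B\'ezout count is unavailable there and you must use the replacement in \cite[Section 11.2]{James} or work directly with the tangent--contact definition; as written, your (i)(a) argument excludes $q$ even. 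Second, your intersection numbers $I_{P_0}(\cC,\cH)=6$ and $8$ are correct but should be justified, e.g.\ by the explicit local computation from the normal forms $Y^2=X^2(X+1)$ and $Y^2=X^3$.

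For (ii) your invariant pair $(i,j)$ and the torus/additive parametrisation of $\cC(K)'$ give the right picture, and your split of cases according to whether cubing is bijective on the relevant group of order $q\mp 1$ is exactly what produces two classes on one side and one on the other. Your lists of four types match what Table~\ref{cubsing} (plus the unlisted $j=0$ cases) gives. The genuine gap you yourself name---showing that two curves with the same $(i,j)$ are $K$--projectively equivalent, with separate arguments in characteristics $2$ and $3$---is the bulk of the work, and your proposal is an outline rather than a proof at that point. That is acceptable for a sketch, but be aware that the characteristic~$3$ cuspidal case really does bifurcate into $\cN_q^1$ and $\cN_0^1$ over $\Fq$ even though they coalesce over $\ov K$, so the completeness argument there is not just bookkeeping.
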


In Table \ref{cubsing}, canonical forms are given in the cases of a node and a
cusp including $\cN_q^1$. For the canonical forms in the case of an isolated
double point, see \cite[Section 11.4]{James}.

\begin{table}[th]
\caption{Canonical forms for singular cubics}
\label{cubsing}
$$\begin{array}{cll}
\hline
&&\\
&&\\[-7mm]
\mbox{Symbol}  & \mcol{1}{c}{\quad q\equiv m\pmod{12}} & 
\mcol{1}{c}{\quad \mbox{Form}}\\
  & \mcol{1}{c}{\quad m} & \\
&&\\
&&\\[-7mm]
\hline
&&\\
&&\\[-7mm]
\cN_1^2  &\quad 3,9,2,8,5,11 &\quad XYZ - X^3 - Y^3 \\
\cN_3^2  &\quad 4,1,7 &\quad XY - X^3 - Y^3 \\
\cN_0^2  &\quad 4,1,7 &\quad XYZ - X^3 - \ga Y^3,
\mbox{ $\ga$ non-cube}\\
\cN_0^1  &\quad 3,9 &\quad ZY^2 - X^2Y - X^3 \\
\cN_q^1  &\quad 3,9 &\quad ZY^2  - X^3 \\
\cN_1^1  &\quad 2,8,4,1,5,7,11 &\quad ZY^2  - X^3 \\[3mm]
\hline
\end{array}
$$
\end{table}

There is a nice combinatorial/geometric characterisation of
singular cubics  due to Tallini Scafati \cite{TS}. See also 
\cite[Section 12.8]{James}.

\begin{theorem} %(4.4)
Let $\cK$ be a set of $k$  points  in $\PG(2,q)$, with $q$ odd$,\ q > 11,$ 
and with no $4$ points of $\cK$ collinear. If
$\cK$ contains $4$ points $P, P_1,P_2, P_3$ such that
\begin{enumerate}
\item[\rm(i)]  there is no line through $P$ intersecting $\cK$ in $3$ points,
\item[\rm(ii)] any conic through $P$ and one of the $P_i$ meets $\cK$ in at 
most  $3$ other points,
\item[\rm(iii)] $k > q-\qa\sqrt{q} +\frac{19}{4},$ 
\end{enumerate}
then $\cK$ is contained in a rational cubic with a double point  at $P$.
\end{theorem}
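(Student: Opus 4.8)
The plan is to normalise coordinates, reinterpret the two intersection hypotheses as an arc-type condition, construct a single cubic with a double point at $P$ by interpolation, and then force the whole of $\cK$ onto it by a B\'ezout argument resting on the Theorem of the Nine Associated Points (Theorem~\ref{nine}). First I would take $P=(0:0:1)$ and pass to the affine plane with $P$ at the origin; there a cubic with a double point at $P$ is exactly one of the form $F=Z\,Q(X,Y)+C(X,Y)$ with $Q$ a binary quadratic and $C$ a binary cubic, so such cubics form a linear system of projective dimension $6$. Hypothesis (i) says that each of the $q+1$ lines of the pencil through $P$ carries at most one further point of $\cK$, so projection from $P$ embeds $\cK\setminus\{P\}$ into this pencil and $k-1\le q+1$, giving $k\le q+2$. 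Combined with (iii), the number of lines through $P$ meeting $\cK$ only in $P$ is $q+2-k<\tfrac14\sqrt q-\tfrac{11}{4}$, i.e. of order $\sqrt q$: almost every line through $P$ meets $\cK$ a second time.

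In these coordinates the second intersection of the line of slope $m$ with $F=0$ is a rational function of $m$ of numerator/denominator degrees $(2,3)$, while a conic through $P$ gives a rational function of degrees $(1,2)$, and requiring passage through one of the $P_i$ cuts the system of conics through $P$ and $P_i$ down to projective dimension $3$. I would then convert (ii) into a combinatorial statement: through $P$, a fixed $P_i$, and any three further points of $\cK$ there passes (generically) a unique conic, and by (ii) this conic already exhausts the points of $\cK$ it may contain besides $P,P_i$, so it contains \emph{no} fourth point of $\cK\setminus\{P,P_i\}$. Thus $\cK\setminus\{P\}$ behaves like an arc with respect to conics through $P$ and $P_i$, meeting each in at most three points. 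The role of the three distinct base points $P_1,P_2,P_3$ is to supply enough independent conic (degree $2$) conditions to reconstruct the cubic (degree $3$) denominator.

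The construction is where I expect the main difficulty to lie. Using the arc condition together with (iii), I would produce one cubic $\mathcal{G}$ with a double point at $P$ through $P_1,P_2,P_3$ and through enough further points of $\cK$; quantitatively, the $\tfrac14\sqrt q$ term in (iii) is precisely what should guarantee, through a Segre/Hasse--Weil-type estimate bounding the number of $\cK$-points on the auxiliary curves, that the interpolation data genuinely fit a single rational function of degrees $(2,3)$ rather than merely agreeing in low degree. I would then verify that $\mathcal{G}$ is irreducible: any factorisation compatible with a double point at $P$ (three concurrent lines, or a line together with a conic through $P$) would force more than the permitted number of points of $\cK$ onto one line or one conic through $P$, contradicting (i) or (ii).

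Finally I would show $\cK\subseteq\mathcal{G}$. If some $Q\in\cK$ did not lie on $\mathcal{G}$, then applying Theorem~\ref{nine} (or B\'ezout directly) to $\mathcal{G}$ and a suitable comparison cubic --- for example the product of the line $PQ$ with a conic through $P$, a $P_i$, and points of $\cK\cap\mathcal{G}$ --- would force an incidence that either places $Q$ on $\mathcal{G}$ after all or exceeds an intersection number allowed by (i) or (ii); the bound (iii) is used once more to ensure the count leaves no slack. Hence every point of $\cK$ lies on the irreducible cubic $\mathcal{G}$, which has a double point at $P$, as required. The principal obstacle is the construction step --- showing that the roughly $q$ interpolation points really do lie on one degree-$(2,3)$ rational function --- and this is exactly where the explicit constant in (iii) must be spent.
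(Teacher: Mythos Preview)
The paper does not actually prove this theorem. It is stated as a result of Tallini Scafati, with references to \cite{TS} and \cite[Section~12.8]{James}, and no argument is supplied. So there is no ``paper's own proof'' to compare against; I can only assess your plan on its own merits.

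As a strategy your outline is sensible in places --- the normalisation, the count showing $k\le q+2$ from~(i), the dimension of the linear system of cubics singular at $P$, and the irreducibility argument are all fine. But what you have written is explicitly a plan rather than a proof, and the step you yourself flag as ``the principal obstacle'' is never carried out: you assert that a Segre/Hasse--Weil-type estimate will force roughly $q$ points to lie on a single degree-$(2,3)$ rational parametrisation, but you do not produce the auxiliary curve, do not state the estimate, and do not show how the specific constant $\tfrac14\sqrt q - \tfrac{19}{4}$ enters. Likewise, the final ``show $\cK\subseteq\mathcal G$'' step is only gestured at (``a suitable comparison cubic'') without a concrete construction or count. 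The interpretation of~(ii) is also slightly loose: the hypothesis bounds intersections for \emph{every} conic through $P$ and a $P_i$, not merely the generically unique one through five chosen points, and you never really use the fact that there are \emph{three} base points $P_1,P_2,P_3$ beyond a vague remark about ``enough independent conic conditions''.

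In short, this is a plausible roadmap with the central lemma missing. If you intend to pursue it, the Tallini Scafati paper \cite{TS} (or the treatment in \cite[Section~12.8]{James}) gives the actual machinery; their argument proceeds via the theory of \emph{graphic curves} and is rather different in detail from the interpolation-plus-B\'ezout scheme you sketch.
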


Due to later results, see \cite[Sections 10.4, 10.5]{James} the lower bound in
(iii) in this theorem can be improved.

\section{\large Classification of non-singular cubics}
\label{sec6}

The following result from Section \ref{sec3} is recalled.
 \begin{theorem}
   If $\cC$ is a non-singular cubic curve defined over $K$ then
   $\cC$ has exactly $0, 1, 3$ or $9$ inflexion points in $\P2(K)$.
 \end{theorem}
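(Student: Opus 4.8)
The plan is to exhibit the set $\mathcal{I}$ of rational inflexions of $\cC$ as a set closed under the operation ``take the third collinear inflexion'', and then to recognise this closure condition as that of being a subspace of the Hesse configuration, whose subspaces have exactly $0,1,3$ or $9$ points. First I would record the closure. Let $P_1,P_2$ be distinct points of $\mathcal{I}$ and write $\cC\cdot\ell=P_1+P_2+P_3$, where $\ell=P_1P_2$. The line $\ell$ is defined over $K$, so parametrising it rationally and substituting into $F$ gives a binary cubic form over $K$ two of whose roots, those cutting out $P_1$ and $P_2$, lie in $K$; hence the third root lies in $K$ and $P_3$ is rational. As the tangent at the inflexion $P_1$ meets $\cC$ only at $P_1$, the line $\ell$ is not that tangent, so $P_3\neq P_1$, and likewise $P_3\neq P_2$. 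By Theorem~\ref{flex3}, $P_3$ is again an inflexion. Thus any two distinct points of $\mathcal{I}$ force the whole line through them into $\mathcal{I}$; in particular $|\mathcal{I}|\neq 2$.

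Next I would bring in the configuration of all inflexions over $\ov{K}$. When the characteristic of $K$ is not $2$, Theorem~\ref{hess} identifies the inflexions with the points of $\cC\cap\cH$, where the Hessian $\cH$ is again a cubic. Since not every point of $\cC$ is an inflexion --- the tangent at a general point meets $\cC$ in a further distinct point --- such a point lies on $\cC$ but, by Theorem~\ref{hess}, off $\cH$; so the irreducible curve $\cC$ is not a component of $\cH$, and B\'ezout's Theorem bounds the inflexions by $9$. When in addition the characteristic is not $3$ there are exactly $9$, forming the Hesse configuration, a copy of $\AG(2,3)$ whose twelve lines are precisely the inflexional lines furnished by Theorem~\ref{flex3}. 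The closure established above is then exactly the assertion that $\mathcal{I}$ is a \emph{subspace} of $\AG(2,3)$: a line meeting $\mathcal{I}$ in two points meets it in all three.

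It remains to classify such subspaces, which I would do directly. The cases $\mathcal{I}=\emptyset$ and $|\mathcal{I}|=1$ are subspaces. If $|\mathcal{I}|\geq 2$, closure forces a full line $\ell_0\subseteq\mathcal{I}$, so $|\mathcal{I}|\geq 3$; and if moreover $\mathcal{I}$ contains a point $P\notin\ell_0$, then joining $P$ to the three points of $\ell_0$ yields, by closure, three further points of $\mathcal{I}$, and one further round of closure fills out all nine points of $\AG(2,3)$. Hence $|\mathcal{I}|\in\{0,1,3,9\}$.

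Finally I would treat the degenerate characteristics, where I expect the only genuine difficulty to lie. The closure step, and with it $|\mathcal{I}|\neq 2$, is characteristic-free, as Theorem~\ref{flex3} rests only on the Theorem of the Nine Associated Points and holds over any field. In characteristic $3$, B\'ezout yields fewer than nine inflexions over $\ov{K}$ --- at most three, necessarily collinear by the closure --- so the subspace count leaves only $|\mathcal{I}|\in\{0,1,3\}$. In characteristic $2$ the count is unchanged, since up to nine inflexions still occur (the $q=4$ set $\cK_9$ is an instance), but the \emph{method} fails: the Hessian form vanishes identically, so Theorem~\ref{hess} can no longer exhibit the ambient $\AG(2,3)$. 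The hard part is thus to re-establish, in characteristic $2$, both the bound of nine inflexions and the $\AG(2,3)$ incidence pattern of the inflexional lines; for this I would invoke the substitute Hessian of \cite[Section~11.2]{James}, after which the subspace argument applies verbatim and again gives $|\mathcal{I}|\in\{0,1,3,9\}$, in agreement with Theorem~\ref{flexno}.
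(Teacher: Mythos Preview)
The paper does not actually prove this theorem. In Section~6 it is introduced with ``The following result from Section~\ref{sec3} is recalled'', and in Section~3 the corresponding Theorem~\ref{flexno} is likewise stated without proof; this is a survey. What Section~3 does supply are exactly the two ingredients you build on --- Theorem~\ref{flex3} (the line through two rational inflexions meets $\cC$ in a third) and the remark that over $\ov K$ the nine inflexions form a copy of $\AG(2,3)$ --- so your argument is the natural completion of what the paper sketches, and it is essentially correct. In particular, the closure step, the rationality of the third intersection, and the direct enumeration showing that a closed subset of $\AG(2,3)$ has $0,1,3$ or $9$ points are all fine.

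Two places are loose. In characteristic~$3$, the conclusion ``at most three inflexions over $\ov K$'' is right, but ``B\'ezout yields fewer than nine'' is not the mechanism: Theorem~\ref{hess} still applies since $2(n-1)=4$ is a unit, the Hessian is still a cubic, and B\'ezout still gives nine intersections \emph{with multiplicity}. What actually happens is that each inflexion occurs with intersection multiplicity~$3$ on $\cC\cap\cH$ (equivalently, the $3$-torsion of an elliptic curve in characteristic~$3$ has order at most~$3$), so the nine collapse to at most three distinct points. In characteristic~$2$ you correctly flag that the ordinary Hessian vanishes identically and defer both the bound of nine and the $\AG(2,3)$ incidence to the substitute Hessian of \cite[Section~11.2]{James}; this is honest, and it is exactly what the paper itself does in the Remark following Theorem~\ref{hess}, but it means that case is not self-contained in your write-up.
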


\subsection{\large Non-singular cubics with nine rational inflexions}
\label{subsec6.1}

\begin{theorem}
\label{cub9}
A non-singular cubic $\cC$ with form $F$ and  nine rational inflexions exists
over $\Fq$ if and only if $q\equiv 1 \pmod{3}$, and then $F$ has
canonical form  
$$F = X^3 + Y^3 + Z^3  - 3c XYZ .$$
\end{theorem}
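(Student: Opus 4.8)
The plan is to split the biconditional into three parts: (A) the existence of such a cubic forces $q \equiv 1 \pmod 3$; (B) conversely, when $q \equiv 1 \pmod 3$ such a cubic exists; and (C) every non-singular cubic with nine rational inflexions has the stated form. Implication (A) is immediate from Theorem \ref{flexno}, which lists $9$ as a possible number of rational inflexions only in the case $q \equiv 1 \pmod 3$. It will also reappear inside the argument for (C), where the inflexions are seen to involve a primitive cube root of unity $\go$, whose presence in $\Fq$ is equivalent to $3 \mid (q-1)$.

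For (B), I would fix a primitive cube root of unity $\go \in \Fq$ and take $F = X^3 + Y^3 + Z^3 - 3cXYZ$ with $c^3 \neq 1$, then verify directly that this curve works. Non-singularity is a short computation: a common zero of $F_X = 3(X^2 - cYZ)$, $F_Y = 3(Y^2 - cXZ)$, $F_Z = 3(Z^2 - cXY)$ satisfies $X^2Y^2Z^2 = c^3 X^2Y^2Z^2$, forcing $c^3 = 1$ when $XYZ \neq 0$, while a zero coordinate is quickly excluded; so $c^3 \neq 1$ yields a non-singular cubic. That the nine points of $\cK_9$ in \eqref{k9} are rational inflexions follows from Theorem \ref{hess}: each such point has one coordinate zero, so $XYZ$ vanishes there, as does $X^3 + Y^3 + Z^3$ because $(-\go^i)^3 = -1$; and since the Hessian of $F$ is proportional to $c^2(X^3+Y^3+Z^3) - (4 - c^3)XYZ$, which again lies in the pencil spanned by $XYZ$ and $X^3+Y^3+Z^3$, it too vanishes on $\cK_9$. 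Hence $\cK_9 \subset \cC \cap \cH$, giving nine rational inflexions.

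For (C), let $\cC$ be any non-singular cubic over $\Fq$ with nine rational inflexions. By Theorem \ref{flex3} the line through any two inflexions meets $\cC$ in a third inflexion, so the nine inflexions and the resulting twelve inflexional lines form the Hesse configuration, a copy of $\AG(2,3)$. I would use that $\AG(2,3)$ admits a partition into three parallel lines; such a partition gives three inflexional lines, pairwise disjoint, covering all nine points. Taking this triangle as the coordinate triangle $X=0$, $Y=0$, $Z=0$ and then rescaling the coordinates, the nine inflexions are carried to the standard set $\cK_9$; it is at this stage that $\go$ enters the coordinates, re-proving $q \equiv 1 \pmod 3$. Now $\cC$ passes through $\cK_9$, and the Theorem of the Nine Associated Points (Theorem \ref{nine}), applied with $\cC$ and its Hessian as the two cubics cutting out $\cK_9$, shows that any cubic through eight of the points passes through the ninth; thus the cubics through $\cK_9$ form a pencil. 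Since $XYZ$ and $X^3+Y^3+Z^3$ both vanish on $\cK_9$ and are independent, this pencil is exactly $\{\lambda(X^3+Y^3+Z^3) + \mu XYZ\}$. Therefore $\cC = \lambda(X^3+Y^3+Z^3) + \mu XYZ$ with $\lambda \neq 0$ (otherwise $\cC$ would be the singular triangle $XYZ=0$); rescaling gives $X^3 + Y^3 + Z^3 - 3cXYZ$, with $c^3 \neq 1$ by the non-singularity computation of (B).

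The step I expect to be the main obstacle is the coordinatisation in (C): one must check that the projective transformation taking the chosen triangle of inflexional lines to the coordinate triangle, together with the subsequent rescaling placing the nine points at $\cK_9$, can be realised over $\Fq$ and not merely over $\ov{\Fq}$, so that the normalisation is genuinely a $\Fq$-isomorphism. Once the nine rational inflexions sit in standard position, the pencil argument and the final normalisation are routine, and the non-singularity and Hessian calculations are likewise direct.
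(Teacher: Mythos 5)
The paper itself states Theorem \ref{cub9} without proof (the classification is deferred to \cite{James}), so your outline stands on its own. The overall strategy --- derive the Hesse configuration from Theorem \ref{flex3}, coordinatise a triangle of inflexional lines to put the nine inflexions at $\cK_9$, and then capture $\cC$ in the pencil through $\cK_9$ via the Theorem of the Nine Associated Points --- is the standard and correct route, and parts (A) and (C) are essentially sound granted the coordinatisation step you rightly flag: that step is where the real work lies, since one must extract from the collinearity relations of the twelve lines that the three points on each side of the triangle differ by factors $\go,\go^2$ with $\go^3=1$, $\go\neq 1$, which both forces $q\equiv 1\pmod 3$ and shows the normalising transformation is defined over $\Fq$ (it is, because all nine points and hence all twelve lines are rational).

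The genuine gap is your reliance on Theorem \ref{hess}. That theorem requires $2(n-1)$ to be invertible; for a cubic $2(n-1)=4$, so it gives nothing in characteristic $2$, and the Remark following it notes the classical Hessian is then identically zero. Indeed your formula is correct only up to the factor $-54$: the Hessian of $X^3+Y^3+Z^3-3cXYZ$ is $-54\bigl(c^2(X^3+Y^3+Z^3)-(4-c^3)XYZ\bigr)$, which vanishes identically mod $2$. Since $q\equiv 1\pmod 3$ includes $q=4,16,64,\dots$ (the paper singles out $q=4$, where the curve is Hermitian), your verification in (B) that the points of $\cK_9$ are inflexions is vacuous precisely there; the fix is to check the triple-contact condition of Section \ref{sec3} directly (substituting the tangent $cX+Y+Z=0$ at $(0:1:-1)$ into $F$ leaves $(1-c^3)X^3$, and similarly at the other eight points), or to invoke the characteristic-free treatment in \cite[Section 11.2]{James}. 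The same issue infects (C), where you propose to apply Theorem \ref{nine} to ``$\cC$ and its Hessian''; but the Hessian is not needed at all: take $\cE=\cC$ and $\cD$ the triangle $XYZ=0$, whose intersection with $\cC$ is exactly the nine distinct inflexions each with multiplicity one, conclude that the cubics through eight of them form a pencil, and observe that this pencil contains the three forms $XYZ$, $X^3+Y^3+Z^3$ and $F$, the first two being independent. This yields $F=\lambda(X^3+Y^3+Z^3)+\mu XYZ$ in every characteristic $\neq 3$, and the final rescaling to $-3c=\mu/\lambda$ again uses that $3$ is invertible.
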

The nine inflexions are those given in (\ref{k9}).

\subsection{\large Non-singular cubics with three rational inflexions}
\label{subsec6.2}

\begin{theorem}
\label{cub3}
A non-singular cubic $\cC$ with form $F$ and  three rational inflexions exists
over $\Fq$ for all $q$. The inflexions are necessarily collinear.
\begin{enumerate}
\item[\rm(i)]\label{3i} If the inflexional tangents are 
concurrent$,$  the canonical forms are as follows$:$
\begin{enumerate}
\item[\rm (a)] $q\equiv 0,2\pmod{3},$
\begin{eqnarray*}
F & = & XY(X + Y) + Z^3;
\end{eqnarray*}
\item[\rm (b)] $q\equiv 1\pmod{3},$
\begin{eqnarray*}
F & = & XY(X + Y) + Z^3,\\
F & = & XY(X + Y) + \ga Z^3,\\
F & = & XY(X + Y) + \ga^2 Z^3,
\end{eqnarray*}
where $\ga$ is a primitive element of $\Fq$.
\end{enumerate}

\item[\rm(ii)]\label{3ii}  If the inflexional tangents are non-concurrent,  
the canonical form is as follows$:$
$$F= XYZ + e(X + Y + Z)^3,$$
$e\neq 0, -1/27$.
\end{enumerate}

\end{theorem}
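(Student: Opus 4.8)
The plan is to prove collinearity, then distil a single identity expressing $F$ through the four relevant lines, and finally to normalise coordinates according to whether or not the inflexional tangents are concurrent. First, let $P_1,P_2,P_3$ be the three rational inflexions and let $\ell$ be the line $P_1P_2$. Since $\cC$ and $\ell$ are defined over $\Fq$ and two of their three common points are rational, so is the third; by Theorem \ref{flex3} it is an inflexion, hence one of $P_1,P_2,P_3$, and it can be neither $P_1$ nor $P_2$, whose tangents meet $\cC$ nowhere else, so $\ell$ could not then contain a second inflexion. Thus it is $P_3$, and the three are collinear. Writing $\ell_i$ for the inflexional tangent at $P_i$, triple contact gives $\cC\cdot\ell_i=3P_i$, so the two cubics $\ell^3$ and $\ell_1\ell_2\ell_3$ — exactly the pair used in the proof of Theorem \ref{flex3} — cut $\cC$ in the same divisor $3P_1+3P_2+3P_3$. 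By the pencil argument behind Theorem \ref{nine} (equivalently, Noether's theorem) they are congruent modulo $F$, so there are scalars with
\[
\ell_1\ell_2\ell_3-\mu\,\ell^3=\nu F,\qquad \mu,\nu\neq0 .
\]
Here $\nu\neq0$ since otherwise $\ell_1\ell_2\ell_3$ would be a perfect cube, forcing $\ell_1=\ell_2=\ell_3$, and $\mu\neq0$ since otherwise $F$ would be reducible. Rescaling $F$, I may take $F=\ell_1\ell_2\ell_3-\mu\,\ell^3$, and the whole problem becomes one of simplifying the four lines, the natural dichotomy being concurrency of $\ell_1,\ell_2,\ell_3$.

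Suppose first that the tangents are non-concurrent. Then they form a triangle, so a projective transformation carries them to $X=0,\,Y=0,\,Z=0$ and $\ell_1\ell_2\ell_3$ to $XYZ$. With $\ell=aX+bY+cZ$, the inflexion on $X=0$ is the triple root of $F(0,Y,Z)=-\mu(bY+cZ)^3$, and were any of $a,b,c$ zero two of the inflexions would collapse to a vertex of the triangle; hence $a,b,c$ are all nonzero and the residual diagonal torus sends $\ell$ to $X+Y+Z$. This gives $F=XYZ-\mu(X+Y+Z)^3$, the stated form with $e=-\mu$, and a short computation of the partial derivatives shows that, in characteristic $\neq3$, the only singular parameters are $e=0$ (where $F=XYZ$) and $e=-1/27$ (singular point $(1:1:1)$); so $e\neq0,-1/27$.

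Now suppose the tangents are concurrent. Placing their common point at $(0:0:1)$ makes each $\ell_i$ a form in $X,Y$ alone, and since the stabiliser of $(0:0:1)$ acts as $\mathrm{PGL}_2$ on the lines through it, I may take $\ell_1\ell_2\ell_3=XY(X+Y)$. The secant $\ell$ cannot pass through $(0:0:1)$, as that would collapse the three inflexions to one point, so a shear $Z\mapsto Z+sX+tY$ turns $\ell$ into $Z$, yielding $F=XY(X+Y)-\mu Z^3$ with $\mu\neq0$. The only surviving freedom is $Z\mapsto\lambda Z$, which multiplies $\mu$ by the cube $\lambda^3$. When $q\equiv2\pmod3$ every nonzero element is a cube, so $\mu$ normalises to the single form $XY(X+Y)+Z^3$; when $q\equiv1\pmod3$ the cubes have index $3$, and the three cube-classes of $\mu$ produce the three forms with coefficients $1,\ga,\ga^2$.

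The hard part, where I expect genuine care to be needed, is characteristic $3$ (the case $q\equiv0$). There the cube map is the Frobenius, $(X+Y+Z)^3=X^3+Y^3+Z^3$, and $-1/27$ is meaningless, so every non-singularity check must be redone: one finds $XYZ+e(X^3+Y^3+Z^3)$ non-singular precisely for $e\neq0$, whereas the concurrent normal form $XY(X+Y)-\mu Z^3$ is forced to acquire a singular point on $X=Y$, so that in characteristic $3$ the concurrent configuration does not actually occur and only the non-concurrent form survives. Finally I would settle existence and exactness of the count: for each residue of $q$ one admissible parameter gives a non-singular curve through the three exhibited inflexions, and since Theorem \ref{flexno} forbids nine rational inflexions once $q\not\equiv1\pmod3$, these three are then automatically all of them; for $q\equiv1\pmod3$ I would instead verify directly, via the Hessian or the rational $3$-torsion characterisation $3P=N$ of Section \ref{grouplaw}, that the remaining six inflexions are irrational.
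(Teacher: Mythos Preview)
The paper does not prove Theorem~\ref{cub3}; it is stated as a classification result and the reader is referred to \cite{James} for the full discussion. So there is no proof in the paper to compare against, and your argument must stand on its own.

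Your overall architecture is the right one and is essentially the classical route: collinearity via Theorem~\ref{flex3}, then the Noether/AF+BG identity $\nu F=\ell_1\ell_2\ell_3-\mu\ell^3$, then a projective normalisation splitting on concurrency of the inflexional tangents. The normalisations in both the concurrent and non-concurrent branches are carried out correctly, including the cube-class count for $q\equiv1\pmod 3$. Your justification that $\mu,\nu\neq0$ is clean.

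Your characteristic-$3$ paragraph is the most interesting part, and it is correct: the form $XY(X+Y)+cZ^3$ has $F_Z\equiv0$ and acquires the rational singular point $(1:1:z)$ with $z^3=-2c^{-1}$, so the ``concurrent'' case in (i)(a) is in fact vacuous when $q\equiv0\pmod3$. This is not visible in the theorem as stated in the paper, and you are right to flag it. Conversely, your check that $XYZ+e(X^3+Y^3+Z^3)$ is non-singular for every $e\neq0$ in characteristic $3$ is correct and supplies the existence claim there.

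The one place where your sketch is genuinely incomplete is the final ``exactly three'' count for $q\equiv1\pmod3$. You propose to verify via the Hessian or the $3$-torsion that the remaining six inflexions of $XYZ+e(X+Y+Z)^3$ are irrational; this is the right plan, but note that in characteristic $3$ the Hessian vanishes identically (cf.\ the Remark after Theorem~\ref{hess}), so over $\mathbf{F}_{3^h}$ you must use the tangent-contact definition directly or the group-theoretic criterion $3P=N$. This is a detail, not a gap in strategy.
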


In case (i), the inflexions are
\[
(1:0:0),\ (0:1:0),\ (1:-1;0);
\]
in case (ii), the inflexions are
\[
(0:1:-1),\ (1:0:-1),\ (1:-1:0).
\]

\subsection{\large Non-singular cubics with one rational inflexion}
\label{subsec6.3}

For $q=2^h$, the {\em trace} of an element $x\in\Fq$ is
\[
\gt(x) = x+ x^2 + x^4 + \cdots + x^{2^{h-1}}.
\]
\begin{theorem}
\label{flex1}
A non-singular$,$ plane$,$ cubic curve defined over $\Fq,\ q=p^h,$ with at
least one inflexion has one of the following canonical forms $F$.
\begin{enumerate}
\item[\rm (i)] $p\neq 2,3,$
$$ F= Y^2Z - X^3 - cXZ^2 - dZ^3,$$
where $4c^3 + 27d^2 \neq 0.$ 

\item[\rm (ii)] $p = 3,$
\begin{enumerate}
\item[\rm (a)]
$$ F= Y^2Z - X^3 - bX^2Z - dZ^3,$$
where $bd \neq 0;$

\item[\rm (b)]
$$ F' = Y^2Z - X^3 - cXZ^2 - dZ^3,$$
where $c \neq 0.$  
\end{enumerate}

\item[\rm (iii)] $p = 2,$
\begin{enumerate}
\item[\rm (a)]
$$ F= Y^2Z + XYZ + X^3 + bX^2Z + dZ^3,$$
where $b= 0$ or a fixed element of trace $1,$ and $c\neq 0;$

\item[\rm (b)]
$$ F= Y^2Z +YZ^2 + eX^3 + cXZ^2 + dZ^3,$$
where $e=1$ when $q\equiv 0,2\pmod{3}$ and $e=1,\ga,\ga^2$ when 
$q\equiv 1\pmod{3},$ with
$\ga$ a primitive element of $\Fq;$ also$,\ d=0$ or a given element
of trace $1$.
\end{enumerate}
\end{enumerate} 
\end{theorem}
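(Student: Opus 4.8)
The plan is to normalise the curve by a projective transformation adapted to an inflexion and its inflexional tangent, and then to reduce the residual coefficients to canonical representatives using the remaining coordinate freedom, treating the three characteristic ranges separately. First I would fix a rational inflexion $P$ of $\cC$ and apply an element of $\mathrm{PGL}(3,q)$ carrying $P$ to $(0:1:0)$ and its inflexional tangent to the line $Z=0$. Writing $F$ as a combination of the ten monomials $X^iY^jZ^k$ with $i+j+k=3$, the incidence $F(0:1:0)=0$ kills the coefficient of $Y^3$, and the inflexion condition---that $Z=0$ meets $\cC$ only at $P$, with multiplicity three---forces $F(X,Y,0)=\lambda X^3$, so the coefficients of $X^2Y$ and $XY^2$ vanish as well. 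This leaves
$$
F = aX^3 + bX^2Z + cXYZ + dY^2Z + eXZ^2 + fYZ^2 + gZ^3,
$$
where non-singularity forces $a\neq 0$ and $d\neq 0$; an initial rescaling normalises $a$ and $d$.

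When $\mathrm{char}\,K\neq 2$ I would complete the square in $Y$: since $Y^2Z + cXYZ + fYZ^2 = Z\bigl(Y+\tfrac12(cX+fZ)\bigr)^2-\tfrac14 Z(cX+fZ)^2$, the substitution $Y\mapsto Y-\tfrac12(cX+fZ)$ removes the $XYZ$ and $YZ^2$ terms and yields $F=Y^2Z-(\text{cubic in }X,Z)$. For $p\neq 3$ I would then depress the cubic by $X\mapsto X-\tfrac{b'}{3}Z$ to kill the $X^2Z$ term, reaching $F=Y^2Z-X^3-cXZ^2-dZ^3$, whose non-singularity is exactly the non-vanishing of $4c^3+27d^2$; this is case (i). For $p=3$ the depression step fails because $3=0$: one checks that translation $X\mapsto X+\mu Z$ fixes the $X^2Z$-coefficient $b$ and sends the $XZ^2$-coefficient $c$ to $-b\mu+c$. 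Hence if $b\neq 0$ one may force $c=0$ (giving $bd\neq 0$, case (ii)(a)), whereas if $b=0$ the coefficient $c$ is untouched and non-singularity forces $c\neq 0$ (case (ii)(b)); the two orbits under $X\mapsto\lambda X+\mu Z,\ Y\mapsto\nu Y$ and overall scaling are exactly these.

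The hard part will be characteristic $2$, where completing the square is impossible and one must instead retain a genuine cross term in $Y$. Here I would split according to whether the $XYZ$-coefficient $c$ vanishes: when $c\neq 0$ a rescaling gives $c=1$ and translations reach the shape (iii)(a), while when $c=0$ non-singularity forces the $YZ^2$-coefficient $f$ to be nonzero, giving after rescaling the shape (iii)(b). The main obstacle is pinning down the remaining discrete invariants. The trace conditions on $b$ and $d$ arise from the Artin--Schreier obstruction: in characteristic $2$ a substitution such as $Y\mapsto Y+sZ$ can absorb a term only when the relevant coefficient lies in the image of $t\mapsto t^2+t$, that is, when $\gt(\cdot)=0$; what survives is a single non-absorbable representative of trace $1$, which is precisely the dichotomy ``$0$ or a fixed element of trace $1$''. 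Finally, in case (iii)(b) the factor $e$ of $X^3$ is determined only up to the cube-class of the leading coefficient under $X\mapsto tX$, so it scales to $1$ exactly when cubing is bijective, i.e.\ $q\equiv 2\pmod 3$, and otherwise ranges over coset representatives $1,\ga,\ga^2$ when $q\equiv 1\pmod 3$.

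Throughout I would verify that each normalising move is realised by an element of the stabiliser of the flag $\bigl(P,\,Z=0\bigr)$ in $\mathrm{PGL}(3,q)$, and that the stated parameter restrictions are exactly those equivalent to non-singularity (via the vanishing of all three partials over $\fbar$). Establishing that the listed forms are genuinely canonical---that distinct rows meet distinct orbits and that within each row the residual stabiliser acts trivially on the remaining parameters---is the part requiring the most care, and it is here that the trace and cube-class invariants must be shown to be complete.
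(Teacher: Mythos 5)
Your outline is correct and is essentially the standard reduction to Weierstrass-type normal forms; the paper itself gives no proof of this theorem, deferring to \cite{James}, where exactly this argument (normalise the inflexional flag to $(0:1:0)$ and $Z=0$, complete the square in $Y$ or use Artin--Schreier substitutions according to the characteristic, then reduce the residual coefficients by the flag stabiliser and record the trace and cube-class invariants) is carried out. The only point worth flagging is that the condition ``$c\neq 0$'' in (iii)(a) is evidently a typo for ``$d\neq 0$'', which is the condition your non-singularity check would actually produce.
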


 A complete discussion and classification of cubic curves over finite fields
 may be found in \cite{James}.

\subsection{\large Non-singular cubics with no rational inflexions}
\label{subsec6.4}

\begin{theorem}
A non-singular$,$ plane$,$ cubic curve defined over $\Fq,\ q=p^h,$ with 
no rational inflexion has one of the following canonical forms $F$.
\begin{enumerate}
\item[\rm (i)]
 $q\equiv 2 \pmod{3}$, $$ F = Z^3 - 3c(X^2 - d X Y +
Y^2) Z - (X^3 - 3 X Y^2 + d Y^3),$$ where $T^3 - 3 T + d$ is
irreducible.

\item[\rm (ii)]
$q\equiv 1\pmod{3},$

\begin{enumerate}
\item[\rm (a)]
  $$F= X^3 + \ga Y^3 + \ga^2 Z^3 - 3c XYZ,$$
with $\ga$ a primitive element of $\Fq$.

\item[\rm (b)]
  $$F= X Y^2 + X^2Z + e Y Z^2 - 
c(X^3 + e Y^3 + e^2 Z^3 - 3e XYZ),$$
with $\ga$ a primitive element of $=\Fq$ and $e=\ga,\ga^2$.
\end{enumerate} 

\item[\rm (ii)]
$q\equiv 0\pmod{3},$
$$F = X^3 + Y^3 + cZ^3  + dX^2Z + d X Y^2 + d^2 X^2 + 
d YZ^2,$$
where $c\neq 1$ and $T^3 +d T - 1$ is a fixed irreducible polynomial.
\end{enumerate} 
\end{theorem}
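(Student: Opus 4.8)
The plan is to work over $\overline{K}=\overline{\Fq}$, identify the inflexions as a Galois-stable configuration, and reduce the phrase ``no rational inflexion'' to a fixed-point-free condition on the Frobenius; the explicit forms then fall out by descent. By Theorem~\ref{flexno} only the count $0$ is in question. Since a non-singular cubic over $\Fq$ has rational points, fix one, say $O\in\cC(\Fq)$, as origin; this makes $\cC$ an elliptic curve, and by Theorem~\ref{group} together with the subsequent dictionary a point $P$ is an inflexion if and only if $3P=N$, where $N=O_t$. Hence the inflexions form a torsor under the $3$-torsion $\cC[3]$, and a rational inflexion is precisely a point of this torsor fixed by the $q$-power Frobenius $\phi$. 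Choosing one inflexion $P_0$ over $\overline{K}$ as base point, $\phi$ acts on the torsor by the affine map $t\mapsto Mt+s$ for $t\in\cC[3]$, where $M=\phi|_{\cC[3]}$ and $s=\phi(P_0)-P_0\in\cC[3]$. Therefore $\cC$ has no rational inflexion if and only if this map is fixed-point-free, i.e.\ $s\notin\operatorname{Im}(M-I)$.

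Suppose first $\operatorname{char}K\neq3$, so $\cC[3]\cong(\mathbb{Z}/3)^2\cong\mathrm{AG}(2,3)$, the nine inflexions whose collinear triples are, by Theorem~\ref{flex3}, exactly the affine lines. Then $M\in\mathrm{GL}(2,3)$ and $\det M\equiv q\pmod 3$, since $\phi$ scales the Weil pairing on $\cC[3]$ by $q$. Fixed-point-freeness forces $1$ to be an eigenvalue of $M$. If $q\equiv1\pmod3$ then $\det M=1$ forces both eigenvalues to equal $1$, so either $M=I$ (and $\phi$ translates the nine inflexions in three orbits of length three) or $M$ is a nontrivial unipotent; these two conjugacy classes will yield the diagonal twist (ii)(a) and the cyclic twist (ii)(b), in both cases with the inflexions rational only over $\Fq^3$. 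If $q\equiv2\pmod3$ then $\det M=-1$ together with the eigenvalue $1$ forces eigenvalues $\{1,-1\}$, so $M$ is conjugate to $\operatorname{diag}(1,-1)$; the single class produces form (i), and irreducibility of $T^3-3T+d$ records exactly that the relevant inflexion orbit does not split over $\Fq$.

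Now suppose $\operatorname{char}K=3$, i.e.\ $q\equiv0\pmod3$; here the $3$-torsion degenerates, with $\cC[3]\cong\mathbb{Z}/3$ for ordinary $\cC$ and $\cC[3]=0$ for supersingular $\cC$. A supersingular curve then has a single geometric inflexion, which is automatically rational and so is excluded from this case. An ordinary curve has three geometric inflexions forming an $\mathrm{AG}(1,3)$-torsor, and the absence of a rational inflexion means $\phi$ cycles them in a single $3$-cycle; equivalently, they are the three roots of an irreducible cubic over $\Fq$. Descending a Hesse-type model then yields the displayed form for $q\equiv0\pmod3$, the irreducibility of $T^3+dT-1$ encoding that no inflexion is rational and the condition $c\neq1$ ensuring non-singularity.

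For each Frobenius class the explicit model is recovered by untwisting: over the splitting field of the inflexions, $\cC$ is projectively equivalent to a Hesse form $X^3+Y^3+Z^3-3cXYZ$ (Theorem~\ref{cub9} and the configuration~\eqref{k9}), and the descent datum determined by $(M,s)$ is realised by the indicated diagonal, cyclic, or (in characteristic $3$) trilinear change of coordinates, reproducing the tabulated $\Fq$-forms. I expect the main obstacle to be exactly this descent and normalisation step: converting the abstract conjugacy class of $(M,s)$ into one explicit representative over $\Fq$, proving the list is complete and irredundant, and showing the displayed arithmetic conditions are sharp. It remains to verify the converse, that each listed form really is non-singular with no rational inflexion; this is a direct computation, via the Hessian (Theorem~\ref{hess}) when $\operatorname{char}K\neq2$ and via the criterion $3P=N$ otherwise, where in each case the irreducibility hypothesis guarantees that the inflexion locus carries no $\Fq$-rational point.
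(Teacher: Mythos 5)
The paper itself gives no proof of this theorem: it is stated as a survey result, with the classification deferred to Hirschfeld's book (\cite[Chapter~11]{James}), so there is no internal argument to compare yours against. Judged on its own terms, your reduction is sound as far as it goes: the inflexions form a torsor under $\cC[3]$, Frobenius acts by $t\mapsto Mt+s$, ``no rational inflexion'' is equivalent to $s\notin\operatorname{Im}(M-I)$, and the constraint $\det M\equiv q\pmod 3$ correctly forces $M=I$ or unipotent when $q\equiv 1\pmod 3$ and $M\sim\operatorname{diag}(1,-1)$ when $q\equiv 2\pmod 3$. But the theorem \emph{is} the list of explicit canonical forms, and that is exactly the step you defer (``I expect the main obstacle to be exactly this descent and normalisation step''). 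Converting each class of descent data $(M,s)$ into one explicit $\Fq$-form, showing the list is complete and irredundant under projective equivalence over $\Fq$ (which is a nontrivial cohomological/normalisation computation, since $(M,s)$ is only defined up to the choice of base inflexion and of isomorphism with $\mathrm{AG}(2,3)$, and distinct $s$ for a fixed $M$ must be shown to give equivalent or listed forms), and verifying the stated arithmetic side conditions, is all of the content; none of it is carried out. The converse direction is likewise only announced.

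There is also a concrete error in the characteristic-$3$ branch. You propose to untwist to a Hesse model $X^3+Y^3+Z^3-3cXYZ$ over the splitting field of the inflexions, but in characteristic $3$ this form equals $(X+Y+Z)^3-3cXYZ=(X+Y+Z)^3$, a triple line; consistently, the paper's Theorem~\ref{cub9} asserts that a nonsingular cubic with nine rational inflexions exists only for $q\equiv 1\pmod 3$, and over a field of characteristic $3$ no extension satisfies this. So the ordinary curve with its three geometric inflexions cannot be obtained by descent from a Hesse form; one must instead start from a characteristic-$3$ normal form with three collinear inflexions (as in Theorem~\ref{cub3} or Theorem~\ref{flex1}(ii)) and twist that. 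Your torsor analysis for $\cC[3]\cong\bZ/3$ (ordinary) versus $\cC[3]=0$ (supersingular, hence exactly one geometric inflexion, necessarily rational and so excluded) is correct, but the descent vehicle you name for realising the single $3$-cycle of inflexions does not exist in this characteristic, so the argument as written would fail there even if the normalisation step were supplied elsewhere.
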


\section{Number of rational points on a cubic}
\label{sec7}

With $ N_1$ the number of rational points on a curve $\cF$, consider the
case that $\cF$ is a non-singular plane cubic $\cC$.
The Hasse bound states that
\begin{equation}
\label{hbound}
  q+1 - 2 \sqrt{q} \leq N_1 \leq q+1 + 2\sqrt{q}. 
\end{equation}

The next result shows what values in the range actually occur.
 For any integer $M$ and any prime divisor $\ell$, let $v_\ell(M)$ be
the highest power of $\ell$ dividing $M$; that is, $\prod_{\ell}
\ell^{v_\ell(M)}$ is the prime decomposition of $M$. 

\begin{theorem}
There exists a non-singular plane cubic over $\Fq,\ q=p^h,$ with precisely
$N_1=q+1 -t$ rational points$,$ where $|\,t\,|\leq 2\sqrt{q},$ in the
cases listed in Table \ref{cubno}. Below$,$ $G_\cC$ is the corresponding 
group formed by the points of the  cubic.
\begin{table}[ht]
\caption{Values of $t$}
\label{cubno}
$$\begin{array}{lccc}
\hline
&&&\\
&&&\\[-8mm]
 & \quad t &\quad p &\quad h \\
&&&\\
&&&\\[-8mm]
\hline
&&&\\
&&&\\[-8mm]
(1) & \quad t \not\equiv 0\ ({\rm mod}\ p) &  &   \\[1mm]
(2) & \quad t=0 &\quad  &\quad odd \\[1mm]
(3) & \quad t=0 &\quad p\not\equiv 1\pmod{4} &\quad even\\[1mm]
(4) & \quad t=\pm\sqrt{q} &\quad p\not\equiv 1\pmod{3} &\quad even
\\[1mm]
(5) & \quad t=\pm 2\sqrt{q} & &\quad even\\[1mm]
(6) & \quad t=\pm \sqrt{2q} & p=2 &\quad odd \\[1mm]
(7) & \quad t=\pm \sqrt{3q} & p=3 &\quad odd \\[3mm]
\hline
\end{array}
$$
\end{table}

\begin{enumerate}
\item[\rm(1)] $G_\cC =
\bZ/(p^{v_{p}(N_1)})\times\prod_{\ell\neq p}\left(
\bZ/(\ell^{r_{\ell}})\times \bZ/(\ell^{s_{\ell}})\right),$\\
$ \mbox{with $r_\ell +
s_\ell= v_\ell(N_1)$ and $\min(r_\ell,s_\ell) \leq v_\ell(q-1);$}$
\item
[\quad$\begin{array}{c}(2)\\(3)\end{array}$]   
$ G_\cC=\left\{\begin{array}{ll}
       \bZ/(q + 1)  &
\mbox{for $q\not\equiv -1\pmod{4},$}\\
      \mbox{$ \bZ/(q + 1)$ or $ \bZ/(2)\times \bZ/((q+1)/2)$} &
\mbox{for $q\equiv -1\pmod{4};$}
\end{array}\right.$
\item[\rm(4)]  $G_\cC=\bZ/(N_1);$
\item[\rm(5)] $G_\cC =  \bZ/(\sqrt{N_1})\times  
\bZ/(\sqrt{N_1}),\ N_1=(\sqrt q\pm 1)^2;$
\item[\rm(6)]  $G_\cC=\bZ/(N_1);$
\item[\rm(7)]  $G_\cC=  \bZ/(N_1).$
\end{enumerate}
\end{theorem}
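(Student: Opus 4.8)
The plan is to translate the whole statement into the language of the Frobenius endomorphism. Each curve in question carries a rational point (an inflexion from the canonical forms of Sections~\ref{subsec6.1}--\ref{subsec6.3}, or simply a consequence of $N_1>0$), so $\cC(\Fq)$ is the group of $\Fq$-rational points of an elliptic curve. Let $\pi$ be the $q$-power Frobenius endomorphism. It satisfies $\pi^2-t\pi+q=0$ with $t=q+1-N_1$, the map $\pi-1$ is separable, and $\cC(\Fq)=\ker(\pi-1)$, so that $N_1=\deg(\pi-1)=(\pi-1)(\ov\pi-1)=q-t+1$. The Hasse bound \eqref{hbound} is precisely the statement that the two roots of $T^2-tT+q$ have absolute value $\sqrt q$. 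Thus ``which $N_1$ occur'' becomes ``which $t$ occur'', and ``what is $G_\cC$'' becomes ``compute $\ker(\pi-1)$ as an abelian group''.

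For the attainable values of $t$ I would split according to whether $p\mid t$. If $p\nmid t$ the curve is ordinary, and I would invoke Deuring's theory (lifting to characteristic zero and reduction of elliptic curves with complex multiplication) to realise every integer $t$ with $p\nmid t$ and $|t|\le 2\sqrt q$ as a Frobenius trace over $\Fq$; this produces row~(1). If $p\mid t$ the curve is supersingular, the eigenvalues of $\pi$ are $\sqrt q$ times a root of unity, and analysing when the associated Weil $q$-number is effective in the sense of Honda--Tate (equivalently, when $\bZ[\pi]$ can sit inside the Frobenius order of a supersingular curve) pins $t$ down to exactly $0,\ \pm\sqrt q,\ \pm 2\sqrt q,\ \pm\sqrt{2q},\ \pm\sqrt{3q}$ under the stated parity and congruence conditions, giving rows~(2)--(7). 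Sufficiency in the supersingular rows I would establish either by exhibiting explicit models or by Deuring's classification of supersingular endomorphism rings as maximal orders in the quaternion algebra ramified exactly at $p$ and $\infty$.

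For the group structure away from $p$, fix a prime $\ell\neq p$. The Tate module $T_\ell\cC\cong\bZ_\ell^{\,2}$ carries $\pi$, and the $\ell$-primary part of $\cC(\Fq)$ is the cokernel of $\pi-1$ acting on $T_\ell\cC$; its two elementary divisors $\ell^{r_\ell},\ell^{s_\ell}$ satisfy $r_\ell+s_\ell=v_\ell(N_1)$. The sole extra constraint is that whenever the full $\ell^m$-torsion is rational the Weil pairing forces $\mu_{\ell^m}\subset\Fq$, hence $\ell^m\mid q-1$, which is exactly $\min(r_\ell,s_\ell)\le v_\ell(q-1)$. This yields the factor $\prod_{\ell\neq p}\bigl(\bZ/(\ell^{r_\ell})\times\bZ/(\ell^{s_\ell})\bigr)$ of row~(1).

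Finally, for the $p$-part and the supersingular rows: in the ordinary case the connected--\'etale sequence of $\cC[p^\infty]$ has \'etale quotient of rank one, so the rational $p$-torsion is cyclic of order $p^{v_p(N_1)}$, completing row~(1); in all supersingular rows $p\nmid N_1$, so only the behaviour away from $p$ survives. Row~(5) is cleanest, since there $\pi$ is the rational integer scalar of absolute value $\sqrt q$: then $\pi-1$ is multiplication by an integer $n$ with $|n|=\sqrt q\mp 1$, whence $\cC(\Fq)=\cC[n]\cong(\bZ/n)\times(\bZ/n)$ and $N_1=n^2=(\sqrt q\mp1)^2$. For rows~(2)--(4),(6),(7) I would prove cyclicity by checking that $\gcd(N_1,q-1)$ cannot support a repeated nontrivial elementary divisor, using the explicit values (for instance $N_1=q\mp\sqrt q+1$ in row~(4)), the single exception being $q\equiv-1\pmod 4$ in rows~(2)--(3), where both $\bZ/(q+1)$ and $\bZ/(2)\times\bZ/((q+1)/2)$ genuinely arise. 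The hard part is the existence half of the supersingular rows: showing that precisely these special traces occur and are realised rests on the full Deuring--Waterhouse classification of endomorphism rings and effective Weil numbers, which is the technical heart; the ordinary existence and the away-from-$p$ structure are routine once the Frobenius picture is set up.
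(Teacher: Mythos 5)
You should first know that the paper does not prove this theorem at all: it is quoted as a survey result, with the range of $t$ attributed to Waterhouse \cite{W} and the group structures to R\"uck \cite{R} and Voloch \cite{V}. So there is no in-paper argument to compare against, and your outline is in fact the standard route taken by those references: pass to the Frobenius $\pi$ with $\pi^2-t\pi+q=0$, identify $\cC(\Fq)$ with $\ker(\pi-1)$, split into ordinary and supersingular cases, and read off the prime-to-$p$ structure from the action of $\pi-1$ on Tate modules together with the Weil pairing. Your treatment of row (5) ($\pi$ an integer, so $\cC(\Fq)=\cC[\sqrt q\mp 1]$) and of the $\gcd(N_1,q-1)$ obstruction forcing cyclicity in rows (4), (6), (7) and the dichotomy in rows (2)--(3) is correct as far as it goes.

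However, as a proof the proposal has two genuine gaps, both at the existence half of the statement, which is what the theorem actually asserts (``there exists a cubic with\dots''). First, the claim that every $t$ with $p\nmid t$, $|t|\le 2\sqrt q$ is realised, and that the supersingular traces are exactly those in rows (2)--(7) under the stated conditions on $p$ and $h$, is precisely Waterhouse's Theorem 4.1; saying ``I would invoke Deuring's lifting theory'' and ``Honda--Tate pins $t$ down'' names the right tools but does not carry out the effectivity analysis (e.g.\ why $t=0$ fails for $h$ even when $p\equiv 1\pmod 4$, or why $t=\pm\sqrt q$ needs $p\not\equiv 1\pmod 3$) --- these congruence conditions come from deciding when $\bZ[\pi]$ embeds suitably into a maximal order of the quaternion algebra, and that computation is the content of the theorem. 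Second, for row (1) your Weil-pairing argument establishes only the \emph{necessity} of $\min(r_\ell,s_\ell)\le v_\ell(q-1)$; the theorem asserts that every pair $(r_\ell,s_\ell)$ with $r_\ell+s_\ell=v_\ell(N_1)$ and $\min(r_\ell,s_\ell)\le v_\ell(q-1)$ is realised by some curve with that $N_1$. That sufficiency is R\"uck's contribution and requires knowing which orders between $\bZ[\pi]$ and the maximal order occur as endomorphism rings and how $\ker(\pi-1)$ varies with the order. Since the paper itself simply cites these facts, your sketch is an acceptable summary of where the proof lives, but it should not be mistaken for a self-contained argument: the two deferred steps are the theorem.
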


The range of $t$ is due to Waterhouse \cite{W} and the corresponding groups 
independently to R{\"u}ck \cite{R} and Voloch \cite{V}.

Let $N_q(1)$ denote the maximum number of rational points on any
non-singular cubic over $\Fq$ and $L_q(1)$ the minimum number. The prime power
$q=p^h$ is {\em exceptional} if $h$ is odd$,$ $h \geq 3,$ and $p$ divides
$\lfloor2\sqrt{q}\rfloor$.
\begin{cor} The bounds $N_q(1)$ and $L_q(1)$ are as follows$:$
\begin{enumerate}
\item[\rm (i)]
$ N_q(1) = \left\{ \begin{array}{ll}
q + \lfloor2\sqrt{q}\rfloor, & \mbox{if $q$ is exceptional}\\ 
q + 1 + \lfloor2\sqrt{q}\rfloor, & \mbox{if $q$ is non-exceptional};
                  \end{array}\right. $
\item[\rm (ii)]
$ L_q(1) = \left\{ \begin{array}{ll}
q + 2 - \lfloor2\sqrt{q}\rfloor, & \mbox{if $q$ is exceptional}\\ 
q + 1 - \lfloor2\sqrt{q}\rfloor, & \mbox{if $q$ is non-exceptional}.
                  \end{array}\right.$
\end{enumerate}
\end{cor}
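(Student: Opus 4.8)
The plan is to extract the bounds $N_q(1)$ and $L_q(1)$ directly from the previous theorem, which classifies exactly which values $t = q+1-N_1$ are achievable by some non-singular cubic over $\Fq$. Since $N_1 = q+1-t$, maximizing $N_1$ means minimizing $t$ (i.e.\ taking $t$ as negative as possible), while minimizing $N_1$ means maximizing $t$. By the Hasse bound \eqref{hbound}, $t$ ranges in $[-2\sqrt{q}, 2\sqrt{q}]$, and since $t$ is an integer the extreme candidates are $t = \pm\lfloor 2\sqrt{q}\rfloor$. So the whole argument reduces to deciding, for the two endpoints $t = \mp\lfloor 2\sqrt{q}\rfloor$, whether a cubic realizing that value actually exists according to Table~\ref{cubno}; if not, one steps inward to the next achievable integer.

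First I would split into the generic case where $\sqrt{q}$ is irrational (that is, $h$ even is required for $\sqrt q \in \mathbb{Z}$) versus the case $q$ a perfect square. When $q$ is \emph{not} a perfect square, $2\sqrt{q}$ is irrational, so $\lfloor 2\sqrt q\rfloor < 2\sqrt q$ strictly and the value $t = \lfloor 2\sqrt q\rfloor$ lies strictly inside the Hasse interval. The only constraint to check is whether row (1) of the table applies, namely $t \not\equiv 0 \pmod p$. For the non-exceptional case this is where I would argue that $t = \lfloor 2\sqrt q\rfloor$ satisfies $t\not\equiv 0\pmod p$, so a cubic with exactly $N_1 = q+1+\lfloor 2\sqrt q\rfloor$ points exists, giving $N_q(1)$; symmetrically $t = -\lfloor 2\sqrt q\rfloor$ gives $L_q(1) = q+1-\lfloor 2\sqrt q\rfloor$. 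The definition of \emph{exceptional} is engineered precisely to flag when this fails: if $h$ is odd, $h\geq 3$, and $p \mid \lfloor 2\sqrt q\rfloor$, then $t = \lfloor 2\sqrt q\rfloor$ is $\equiv 0 \pmod p$ and row (1) is unavailable. In that situation I would check rows (2)--(7) to confirm none of them realizes this particular $t$, so the endpoint is genuinely unattainable; the best achievable value becomes the next integer inward, $t = \lfloor 2\sqrt q\rfloor - 1$ (which is coprime to $p$ since consecutive integers can't both be divisible by $p$ when $p>1$), yielding $N_q(1) = q + \lfloor 2\sqrt q\rfloor$ and likewise $L_q(1) = q+2-\lfloor 2\sqrt q\rfloor$.

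The case where $q$ is a perfect square, $\sqrt q \in \mathbb{Z}$, needs separate bookkeeping because then $2\sqrt q = \lfloor 2\sqrt q\rfloor$ is attainable exactly, and row (5) of the table explicitly provides $t = \pm 2\sqrt q$ with no congruence restriction (it requires only $h$ even). Hence the extreme endpoints are always realized and $q$ is automatically non-exceptional, consistent with the formula. I would verify that the definition of exceptional indeed excludes perfect squares: if $q = p^h$ is a perfect square then $h$ is even, so the condition ``$h$ odd'' fails and $q$ is non-exceptional, matching the fact that row (5) saturates the Hasse bound.

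The main obstacle is the exceptional case, and more precisely the claim that when $p \mid \lfloor 2\sqrt q\rfloor$ the extreme value $t = \pm\lfloor 2\sqrt q\rfloor$ is realized by \emph{none} of rows (1)--(7) — ruling out row (1) is immediate from $p\mid t$, but I must also confirm that the sporadic rows (4),(6),(7), whose $t$-values are $\pm\sqrt q$, $\pm\sqrt{2q}$, $\pm\sqrt{3q}$, do not coincidentally equal $\pm\lfloor 2\sqrt q\rfloor$ for the relevant $q$, and that rows (2),(3) ($t=0$) and (5) (requiring $h$ even) are irrelevant here since $h$ is odd. This amounts to a short number-theoretic check that for odd $h\geq 3$ with $p\mid\lfloor 2\sqrt q\rfloor$, the quantities $\sqrt{q},\sqrt{2q},\sqrt{3q}$ are either non-integral or do not hit the floor value; I expect this to follow from comparing sizes, since $\lfloor 2\sqrt q\rfloor$ is close to $2\sqrt q$ while those other values are at most $\sqrt{3q} < 2\sqrt q$, leaving only boundary cases to dispatch. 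I would cite Waterhouse~\cite{W} for the attainability statements underpinning the table and treat the corollary as a clean extraction from it.
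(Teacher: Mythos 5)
Your overall strategy is the right one, and in fact it is the only derivation available: the paper states this corollary without proof, as a direct extraction from the preceding theorem (Waterhouse's table of attainable traces $t$), so reading off the extreme attainable $t$ from rows (1)--(7) and letting the definition of \emph{exceptional} flag the failures is exactly what is intended. Your treatment of the perfect-square case via row (5), and of the exceptional case --- ruling out rows (2)--(5) by parity of $h$ and rows (6),(7) by the size comparison $\sqrt{2q},\sqrt{3q}<\lfloor 2\sqrt q\rfloor$ for $q$ large enough, then stepping in to $t=\pm(\lfloor 2\sqrt q\rfloor-1)$, which cannot also be divisible by $p$ --- is sound.

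There is, however, one concrete hole in your non-exceptional branch. You assert that for non-exceptional $q$ with $h$ odd the endpoint $t=\pm\lfloor 2\sqrt q\rfloor$ is always delivered by row (1), i.e.\ that $p\nmid\lfloor 2\sqrt q\rfloor$. That is false for $q=2$ and $q=3$: one has $\lfloor 2\sqrt 2\rfloor=2$ and $\lfloor 2\sqrt 3\rfloor=3$, so $p$ divides the floor, yet both are non-exceptional because the definition requires $h\ge 3$. For these two values the extreme trace is supplied not by row (1) but by row (6) ($t=\pm\sqrt{2q}=\pm 2$ for $q=2$) and row (7) ($t=\pm\sqrt{3q}=\pm 3$ for $q=3$), respectively; this is precisely why the clause $h\ge 3$ appears in the definition of exceptional. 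You should split the odd-$h$ non-exceptional case into $h=1$ with $p\ge 5$ (where $0<\lfloor 2\sqrt p\rfloor<p$ forces $p\nmid\lfloor 2\sqrt p\rfloor$ and row (1) applies), the two sporadic primes $q=2,3$ (rows (6),(7)), and $h\ge 3$ with $p\nmid\lfloor 2\sqrt q\rfloor$ (row (1)). With that repair the argument is complete.
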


\begin{cor}
The number $N_1$ takes every value between $q + 1 - \lfloor2\sqrt{q}\rfloor$ and
$q + 1 + \lfloor2\sqrt{q}\rfloor$ if and only if {\rm (a)} $q =p$ or 
{\rm (b)} $q=p^2$ with $p=2$ or $p=3$ or $p\equiv 11\pmod{12}.$
\end{cor}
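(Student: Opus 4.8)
The plan is to translate the statement into a question about which values of the trace $t=q+1-N_1$ are realizable, and then read off the answer from the classification recorded in Table \ref{cubno}. Since $N_1=q+1-t$, the number $N_1$ attains every integer in $[\,q+1-\lfloor 2\sqrt q\rfloor,\ q+1+\lfloor 2\sqrt q\rfloor\,]$ if and only if $t$ attains every integer in $[-\lfloor 2\sqrt q\rfloor,\ \lfloor 2\sqrt q\rfloor]$. By part (1) of the preceding theorem, every admissible $t$ with $p\nmid t$ is realized, so the only possible gaps occur at the multiples of $p$. Parts (2)--(7) list exactly which multiples of $p$ occur, namely $0$, $\pm\sqrt q$, $\pm 2\sqrt q$, $\pm\sqrt{2q}$ and $\pm\sqrt{3q}$, each under its stated parity and congruence hypothesis. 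Thus the whole problem reduces to deciding, for each $q=p^h$, whether \emph{every} multiple of $p$ lying in $[-\lfloor 2\sqrt q\rfloor,\ \lfloor 2\sqrt q\rfloor]$ appears in this short list.

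For the ``only if'' direction I would first dispose of all $h\ge 3$ at once by exhibiting a single universal obstruction: the value $t=p$. Since $2\sqrt q=2p^{h/2}>p$ for $h\ge 3$, the integer $p$ lies in the admissible range; it is a multiple of $p$, so part (1) does not apply to it; and for $h\ge 3$ it differs from every listed special value. Indeed $p\neq 0$, $p\neq p^{h/2}=\sqrt q$ because $h/2>1$, certainly $p\neq 2\sqrt q$, and $p\neq 2^{(h+1)/2}=\sqrt{2q}$ or $3^{(h+1)/2}=\sqrt{3q}$ because these exceed $p$ once $h\ge 3$. Hence $t=p$ is never realized when $h\ge 3$, and all such $q$ are excluded. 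This leaves only $h=1$ and $h=2$.

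For $h=2$, so $q=p^2$ and $\lfloor 2\sqrt q\rfloor=2p$, the multiples of $p$ in range are exactly $0,\pm p,\pm 2p$. Here $\pm 2p=\pm 2\sqrt q$ is always realized by part (5); $0$ is realized (part (3)) precisely when $p\not\equiv 1\pmod 4$; and $\pm p=\pm\sqrt q$ is realized (part (4)) precisely when $p\not\equiv 1\pmod 3$. So every value occurs if and only if $p\not\equiv 1\pmod 4$ and $p\not\equiv 1\pmod 3$, which by the Chinese Remainder Theorem (treating $p=2,3$ directly and running through $p\equiv 1,5,7,11\pmod{12}$ for $p\ge 5$) is equivalent to $p=2$, $p=3$, or $p\equiv 11\pmod{12}$, exactly alternative (b). For $h=1$, so $q=p$, the only multiple of $p$ in range is $0$ when $p\ge 5$ (as $2\sqrt p<p$ there), and $0$ is realized by part (2); when $p=2$ or $p=3$ the additional in-range multiples $\pm p$ coincide with $\pm\sqrt{2q}$ or $\pm\sqrt{3q}$ and are realized by parts (6) and (7). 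Hence every value occurs for all primes $q=p$, giving alternative (a).

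The step I expect to be most delicate is the bookkeeping in the $h\ge 3$ argument: one must confirm that $p$ is genuinely smaller than each of $\sqrt q$, $2\sqrt q$, $\sqrt{2q}$, $\sqrt{3q}$ in the relevant ranges so that $t=p$ truly escapes the list, and separately that $p$ does lie below $2\sqrt q$. Everything else is a finite congruence computation, and the conceptual content is entirely carried by the observation that $t=p$ is the single obstruction ruling out every $q$ that is neither a prime nor an admissible square.
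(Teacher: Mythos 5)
Your proof is correct and is essentially the intended derivation: the paper states this corollary without proof, as an immediate consequence of the Waterhouse table, and your reduction to checking which multiples of $p$ in $[-\lfloor 2\sqrt q\rfloor,\ \lfloor 2\sqrt q\rfloor]$ appear among the special values in rows (2)--(7) --- with $t=p$ serving as the universal obstruction for all $h\ge 3$, and the congruences $p\not\equiv 1\pmod 4$, $p\not\equiv 1\pmod 3$ combining to $p=2$, $p=3$ or $p\equiv 11\pmod{12}$ for $h=2$ --- is exactly the right bookkeeping. The one point worth flagging is that your ``only if'' direction relies on the converse of the table, namely that the listed cases are the \emph{only} realizable traces; the theorem as printed asserts only existence, but this exhaustiveness is indeed part of Waterhouse's theorem as cited, so the gap is one of citation rather than substance.
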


\begin{remark}
The only exceptional $q < 1000$ is $q=128$.
\end{remark}

\begin{theorem}
The number of points $N_1$ on a non-singular cubic$,$ for which  the number
$n$ of rational inflexions is $n=0,1,3,9,$ satisfies the following$:$
\begin{enumerate}
\item[\rm (i)] If $n=0,$ then $N_1 \equiv 0\pmod{3};$
\item[\rm (ii)] If $n=1,$ then $N_1 \equiv \pm 1\pmod{3};$
\item[\rm (iii)] If $n=3,$ then $N_1 \equiv 0\pmod{3};$
\item[\rm (iv)] If $n=9,$ then $N_1 \equiv 0\pmod{9}.$
\end{enumerate}
\end{theorem}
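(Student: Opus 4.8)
The plan is to convert the purely geometric count of rational inflexions into a count in the group $G$ of rational points and then read off the divisibility of $N_1$ from the size of the $3$-torsion subgroup. By the Hasse bound (\ref{hbound}) we have $N_1 \geq q+1-2\sqrt{q} > 0$, so $\cC$ has a rational point; fix one as the identity $O$ and form the group $G = (\cC(K),\oplus)$ of order $N_1$ provided by Theorem \ref{group}. Writing $N = O_t$ for the tangential of $O$, the geometric–group-theoretic dictionary assembled at the end of Section \ref{grouplaw} identifies the rational inflexions of $\cC$ with the solutions in $G$ of the equation $3P = N$.

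Next I would examine the multiplication-by-$3$ endomorphism $[3]\colon G \to G$, $P \mapsto 3P$, whose kernel is the $3$-torsion subgroup $G[3]$ and whose image is $3G$. If $N \notin 3G$ the equation $3P = N$ has no solution, while if $N \in 3G$ its solution set is a coset of $G[3]$. Hence the number $n$ of rational inflexions equals $0$ when $N \notin 3G$ and equals $|G[3]|$ when $N \in 3G$. Since $n \in \{0,1,3,9\}$ by Theorem \ref{flexno}, and $G$ is a product of at most two cyclic groups by the classification of Table \ref{cubno}, we have $|G[3]| \in \{1,3,9\}$, with $|G[3]| = 1 \iff 3\nmid N_1$, with $|G[3]| = 3$ forcing $3\mid N_1$, and with $|G[3]| = 9$ forcing $(\bZ/3)\times(\bZ/3) \subseteq G$ and hence $9 \mid N_1$.

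The three cases with $n \geq 1$ then fall out at once from $n = |G[3]|$: the value $n = 1$ gives $3\nmid N_1$, i.e. $N_1 \equiv \pm 1\pmod 3$, proving (ii); the value $n = 3$ gives (iii); and $n = 9$ yields $(\bZ/3)\times(\bZ/3)\subseteq G$, hence $9\mid N_1$, proving (iv). The case (i), $n = 0$, is the one I expect to be the crux, precisely because here no rational point is an inflexion and so $O$ cannot be chosen to be one. The resolution is that $n = 0$ says exactly that $N \notin 3G$, so $[3]$ is not surjective; as $G$ is finite, $[3]$ is then not injective either, whence $G[3] \neq \{O\}$ and therefore $3\mid N_1$, giving $N_1 \equiv 0 \pmod 3$.

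The one external input I would want to double-check is that the equivalence ``$P$ is an inflexion $\iff 3P = N$'' remains valid in every characteristic, in particular in characteristics $2$ and $3$ where the Hessian criterion of Theorem \ref{hess} degenerates; but this is asserted to hold over an arbitrary field in the dictionary of Section \ref{grouplaw}, so I would invoke it directly rather than re-deriving it.
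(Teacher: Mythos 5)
The paper states this theorem without proof (it is one of the surveyed results in Section \ref{sec7}), so there is no in-paper argument to compare against; judged on its own, your proof is correct. The reduction of the inflexion count to the fibre of the multiplication-by-three map over $N$, the dichotomy $n=0$ versus $n=|G[3]|$, and the deduction of each case by Lagrange/Cauchy are all sound, and your treatment of case (i) --- non-surjectivity of $[3]$ on a finite group forcing non-injectivity, hence nontrivial $3$-torsion, hence $3\mid N_1$ --- is exactly the right way to handle the absence of a rational inflexion. Two small tightenings are available. First, the dictionary entry ``$P$ is an inflexion $\iff 3P=N$'', which you flag as an external input to be double-checked in characteristics $2$ and $3$, needs no appeal to the Hessian at all: the tangent at $P$ cuts $\cC$ in $P,P,P_t$, so Theorem \ref{group}(iii) gives $2P\oplus P_t=N$, and $P$ is an inflexion iff $P_t=P$, i.e.\ iff $3P=N$; this derivation is characteristic-free and uses only what Section \ref{grouplaw} already provides. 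Second, you do not actually need the group-structure classification of Table \ref{cubno} to see that $|G[3]|\in\{1,3,9\}$: in the cases $n\ge 1$ this follows from $n=|G[3]|$ together with Theorem \ref{flexno}, and in the case $n=0$ you only use $|G[3]|>1$, so the argument is self-contained modulo Section \ref{grouplaw} and the Hasse bound (\ref{hbound}).
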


Let $A_q$ be the total number of isomorphism classes and $P_q$ the total
number of projective equivalence classes. Also,
$n_i$ for $i=0,1,3,9$  is the number of projective equivalence classes
with exactly $i$ rational inflexions.  Hence
%\begin{eqnarray*}
$$A_q  =  n_9 + n_3 + n_1,\qquad
P_q  =  n_9 + n_3 + n_1 + n_0.
$$
%\end{eqnarray*}
\begin{theorem}
\begin{enumerate}
\item[\rm(i)] ${\dis A_q = 2q + 3 + \left(\frac{-4}{q} \right) +
                     2\left(\frac{-3}{q} \right)};$
\item[\rm(ii)] ${\dis P_q = 3q + 2 + \left(\frac{-4}{q} \right) +
                     \left(\frac{-3}{q} \right)^2 
                     + 3\left(\frac{-3}{q} \right)}.$
\end{enumerate}
Here$,$ the following Legendre--Jacobi symbols
are used$:$
\begin{eqnarray*}
\left(\frac{-4}{c} \right) & = & \left\{\begin{array}{rl}
1 & \mbox{if $c\equiv 1 \pmod 4$},\\
0 & \mbox{if $c\equiv 0 \pmod 2$},\\
-1 & \mbox{if $c\equiv -1 \pmod 4$};      \end{array}\right. \\
\left(\frac{-3}{c} \right) & = & \left\{\begin{array}{rl}
1 & \mbox{if $c\equiv 1 \pmod 3$},\\
0 & \mbox{if $c\equiv 0 \pmod 3$},\\
-1 & \mbox{if $c\equiv -1 \pmod 3$}.      \end{array}\right.
\end{eqnarray*}
\end{theorem}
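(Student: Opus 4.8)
The plan is to reduce both parts to counts of elliptic curves over $\Fq$, sorted by the structure of the rational $3$-torsion. For part (i) the key observation is that $A_q$ is the number of isomorphism classes of elliptic curves over $\Fq$. Every non-singular cubic over $\Fq$ carries a rational point, since the Hasse bound (\ref{hbound}) gives $N_1 \geq (\sqrt q - 1)^2 > 0$, so it is an elliptic curve; and by the remarks preceding Theorem \ref{nine}, isomorphism classes of non-singular cubics correspond bijectively to projective classes of cubics possessing an inflexion, hence, on taking an inflexion as origin (any two inflexions differ by a translation, an abstract automorphism), to isomorphism classes of elliptic curves. Thus $A_q = n_1 + n_3 + n_9$ is precisely the number of isomorphism classes of elliptic curves.

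I would then run over the $j$-invariant, which takes all $q$ values of $\Fq$, and count $\Fq$-forms of each. For $p \geq 5$, a curve with $j \neq 0, 1728$ has automorphism group $\{\pm 1\}$ and two twists; the curve of the form in Theorem \ref{flex1}(i) with $j = 1728$ has $\gcd(4, q - 1) = 3 + \left(\tfrac{-4}{q}\right)$ quartic twists; and the curve with $j = 0$ has $\gcd(6, q - 1) = 4 + 2\left(\tfrac{-3}{q}\right)$ sextic twists. Adding these contributions gives
\[
2(q - 2) + \left(3 + \left(\tfrac{-4}{q}\right)\right) + \left(4 + 2\left(\tfrac{-3}{q}\right)\right) = 2q + 3 + \left(\tfrac{-4}{q}\right) + 2\left(\tfrac{-3}{q}\right),
\]
which is (i). In characteristics $2$ and $3$ the values $j = 0$ and $j = 1728$ coincide and the automorphism group is larger, so the $\gcd$-count does not apply; there I would instead verify the same formula directly from the canonical forms of Theorem \ref{flex1}, using that $\left(\tfrac{-4}{q}\right) = 0$ when $p = 2$ and $\left(\tfrac{-3}{q}\right) = 0$ when $p = 3$, so that the corrections vanish correctly.

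For part (ii) it is enough to compute $n_0 = P_q - A_q$. I would describe the plane-cubic models of a fixed elliptic curve $\cE$ by rational divisor classes of degree $3$: a model is the embedding by a complete linear system $|\mathfrak{d}|$ with $\deg \mathfrak{d} = 3$, its inflexions are the points $P$ with $3P \sim \mathfrak{d}$, and two models are projectively equivalent exactly when related by an abstract automorphism, that is, a translation composed with an element of $\mathrm{Aut}(\cE, O)$. Identifying $\mathrm{Pic}^3(\cE)(\Fq)$ with $\cE(\Fq)$ via $\mathfrak{d} \mapsto \mathfrak{d} - 3O$, translations act as addition by $3\,\cE(\Fq)$, so the models modulo translation are indexed by $\cE(\Fq)/3\,\cE(\Fq)$, a group of order $|\cE[3](\Fq)| \in \{1, 3, 9\}$. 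The trivial class is the unique model possessing rational inflexions (it then has $|\cE[3](\Fq)|$ of them, the model counted by $n_1$, $n_3$ or $n_9$), and every non-trivial class yields a model with no rational inflexion. Thus $n_0$ is the total number of $\mathrm{Aut}(\cE, O)$-orbits on the non-zero classes, summed over isomorphism classes of $\cE$; generically $-1$ acts by negation and contributes $0$, $1$ or $4$ orbits according as $|\cE[3](\Fq)| = 1, 3, 9$. Evaluating this sum yields $n_0 = q - 1 + \left(\tfrac{-3}{q}\right)^2 + \left(\tfrac{-3}{q}\right)$, and then $P_q = A_q + n_0$ gives (ii).

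The routine ingredient is the twist count for $p \geq 5$; the hard part is the exact bookkeeping at the exceptional invariants $j = 0, 1728$ and in characteristics $2$ and $3$, where the automorphism group jumps and affects both the twist count in (i) and the orbit count in (ii). One must check that, in each residue class of $q$ modulo $12$, the accumulated corrections assemble into exactly $\left(\tfrac{-4}{q}\right)$, $\left(\tfrac{-3}{q}\right)$ and $\left(\tfrac{-3}{q}\right)^2$. Equivalently, the crux is to pin down the precise number of isomorphism classes with each rational $3$-torsion type (the values of $n_3$ and $n_9$) together with the residual symmetries among the parameters in the canonical forms of Theorems \ref{cub9}, \ref{cub3} and Section \ref{subsec6.4}; these provide an independent case-by-case cross-check of the count.
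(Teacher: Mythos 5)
The paper itself offers no proof of this theorem: it is quoted as a known classification, with the counts $A_q(t),P_q(t)$ attributed to Schoof \cite{Sc} and the details referred to \cite[Section 11.11]{James}. So your proposal can only be judged on its own merits. Its architecture is the standard and correct one: $A_q=n_1+n_3+n_9$ is the number of isomorphism classes of elliptic curves over $\Fq$ (every non-singular plane cubic has a rational point by the Hasse bound and hence an inflexional model), and for $p\geq 5$ your twist count $2(q-2)+\gcd(4,q-1)+\gcd(6,q-1)$ does prove (i) completely. Likewise, the description of projective classes of plane models of a fixed $\cE$ as $\mathrm{Aut}(\cE,O)$-orbits on $\cE(\Fq)/3\,\cE(\Fq)$, with the trivial coset giving the unique inflexional model, is the right way to isolate $n_0=P_q-A_q$.

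The genuine gap is that the two places where the arithmetic content of the formula actually resides are asserted rather than carried out. First, for $p=2,3$ the "direct verification" from Theorem \ref{flex1} is a real computation (one must decide which of the listed canonical forms are isomorphic, and the automorphism groups at $j=0$ have order $24$ and $12$ respectively), not a formality. Second, and more seriously, the sentence "Evaluating this sum yields $n_0=q-1+\left(\frac{-3}{q}\right)^2+\left(\frac{-3}{q}\right)$" hides the entire difficulty of (ii). The generic orbit counts $0,1,4$ give $n_0\approx n_3+4n_9$, and with the values in Table \ref{ineqcub} this equals $q+5$ for $q\equiv 1\pmod{12}$, not the correct $q+1$: the discrepancy is repaired only by the curves with $j=0$, where $\mu_6\subseteq\mathrm{Aut}(\cE,O)$ acts unipotently-times-$(-1)$ on $\cE(\Fq)/3\,\cE(\Fq)\cong(\bZ/3)^2$ and collapses the $8$ non-zero classes into $2$ orbits instead of $4$. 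Thus the special-$j$ corrections are not a sanity check but an essential input, and they in turn require knowing how the rational $3$-torsion types (equivalently $n_3$ and $n_9$) are distributed among the twists at $j=0$ --- which is essentially the theorem being proved. To close the argument you would need either to compute $n_3$ and $n_9$ independently (e.g.\ from Theorems \ref{cub9} and \ref{cub3} by counting the parameters $c$, $e$, $\ga^i$ up to the residual projectivities) or to invoke a mass formula weighting each class by $1/|\mathrm{Aut}(\cE,O)|$; as written, the proposal establishes the shape of the answer but not the coefficients of $\left(\frac{-3}{q}\right)$ and $\left(\frac{-3}{q}\right)^2$.
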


\begin{table}[th]
\caption{Number of inequivalent cubics}
\label{ineqcub}
$$\begin{array}{ccccccc}
\hline
&&\\
&&\\[-7mm]
 q\equiv m\!\!\!\pmod{12} & n_9 & n_3 & n_1 & n_0 & A_q & P_q
\\
 \mcol{1}{c}{\quad m} & &&&&& \\
&&&&&\\
&&&&&\\[-8mm]
\hline
&&&&&\\
&&&&&\\[-7mm]
3  & 0 & q-1 & q +3 & q-1 & 2q +2& 3q+1\\[1mm]
9  & 0  & q-1 & q +5 & q-1 & 2q +4 & 3q+3 \\[1mm]
2,8  & 0  & q-1 & q +2 & q-1 &2q +1& 3q \\[1mm]
4  & \twe(q+8) & \thi(2q+4) & \qa(5q + 12) & q+1 & 2q +5& 3q+6   \\[1mm]
1  & \twe(q+11) & \thi(2q+4) & \qa(5q + 15) & q+1&2q +6 & 3q+7 \\[1mm]
7  & \twe(q+5) &\thi(2q+4)  & \qa(5q + 9) & q+1 & 2q +4 & 3q+5\\[1mm]
5  & 0 &  q-1& q +3 & q-1 & 2q +2 & 3q+1 \\ [1mm]
11 & 0 &  q-1& q +1 & q-1 &2q & 3q-1\\[3mm]
\hline
\end{array}
$$
\end{table}

The number of inequivalent types of cubic with a fixed number of rational
points can also be given. Let $A_q(t)$ and $P_q(t)$ be the number of
inequivalent non-singular cubics with exactly $q + 1 -t$ rational points under
isomorphism and projective equivalence. So $$ A_q = \sum_t A_q(t), \qquad P_q
= \sum_t P_q(t).$$ The values of $A_q(t)$ and $P_q(t)$, due to Schoof
\cite{Sc}, are also given in
\cite[Section 11.11]{James}.

\section{Some new applications  in finite geometries} 
\label{appl}

Much of finite geometries is concerned with maximal sets of points in
$\PG(2,q)$ obeying various geometrical conditions: such sets are often of
considerable interest also in algebraic coding theory. For example, a key
result in the theory of MDS codes has as its foundation a famous theorem of
the late B.~Segre. This result asserts that, for $q$ odd, a set of points with
no 3 collinear has size at most $q+1$ with equality if and only
if the set is the point set of a non-degenerate conic.

The next result, found independently by A. Zirilli and P.M. Neumann, 
see  \cite{B}, 
is usually phrased using elliptic curves, that is, non-singular cubic curves
with an inflexion point. However, as is seen below, this assumption is not
necessary.

\begin{theorem}  \label{curvecaps}%4.2
If a non-singular cubic curve $\cC$ has an even number $k$ of rational
points$,$ then there exists a set $\cS$ of $k/2$ points of $\cC$ with no three
collinear.
\end{theorem}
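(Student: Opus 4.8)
The plan is to exploit the group structure on $\cC(K)' = \cC(K)$ supplied by Theorem~\ref{group}, choosing the identity $O$ to be \emph{any} rational point, and then to select $\cS$ to be exactly one point from each pair $\{P, -P\}$. Since the group $G$ has even order $k$, and three points are collinear precisely when they sum to $N = O_t$ (Theorem~\ref{group}(iii)), the condition ``no three of $\cS$ collinear'' translates into a purely group-theoretic statement: I must pick a set $\cS \subset G$ of size $k/2$ such that no three elements $P, Q, R \in \cS$ satisfy $P \oplus Q \oplus R = N$.

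First I would fix the identity $O$ and form the involution $P \mapsto -P$ on $G$. The fixed points of this involution are exactly the elements of order dividing $2$, i.e. the solutions of $2P = O$; note that $N$ itself satisfies $-N = N$ by Theorem~\ref{group}(ii), so $N$ is one such fixed point. The natural idea is to split $G$ into the $2$-torsion subgroup $T = \{P : 2P = O\}$ and the remaining points, which pair off as $\{P, -P\}$ with $P \neq -P$. Choosing one representative from each such pair handles the non-involutory points; the involutions require separate care, and getting the count to come out to exactly $k/2$ while simultaneously avoiding all collinearity relations is where the real work lies.

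\emph{The hard part} will be arranging the choice so that \emph{no} collinear triple survives, i.e. no $P \oplus Q \oplus R = N$ with $P,Q,R \in \cS$ (including the degenerate cases where two of them coincide, which correspond to tangent lines). The cleanest way I can see to force this is to use a homomorphism that detects the relevant parity. Concretely, since $k$ is even, the quotient $G / 2G$ is a nontrivial elementary abelian $2$-group, so there is a surjective homomorphism $\chi : G \to \bZ/2$. Translating by $N$, I would try to define $\cS = \{P \in G : \chi(P) = \chi(N) + 1\}$ (the nontrivial coset of $\ker\chi$ adjusted by $N$), or its complement, whichever has the right size. Because $\chi$ is a homomorphism, any relation $P \oplus Q \oplus R = N$ forces $\chi(P)+\chi(Q)+\chi(R) = \chi(N)$; if all three lie in the coset where $\chi \equiv \chi(N)+1$, the left side is $3(\chi(N)+1) = \chi(N)+1 \neq \chi(N)$ in $\bZ/2$, a contradiction. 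This rules out every collinear triple at once, and since $\ker\chi$ has index $2$, the chosen coset has exactly $k/2$ points.

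I would therefore organise the proof as: (1) invoke Theorem~\ref{group} to get the abelian group $G = \cC(K)$ of even order $k$ with collinearity $\Leftrightarrow P\oplus Q\oplus R = N$; (2) produce the index-two subgroup $H = \ker\chi \leq G$, which exists because $k$ even forces $G$ to have even order and hence a subgroup of index $2$; (3) set $\cS$ to be the coset $N \oplus H$ if $N \notin H$, and otherwise argue via a character separating $N$ appropriately, so that $|\cS| = k/2$; (4) verify by the homomorphism parity argument that no three points of $\cS$ (counted with the tangential/coincidence cases) are collinear. The subtle point to check carefully is the degenerate collinearities — a point together with its tangential, or a tangent line meeting $\cS$ — but these are all captured by the same equation $P \oplus Q \oplus R = N$ with possible repetitions, so the parity obstruction still applies. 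The one case demanding attention is when $N \in H$: then the naive coset $N \oplus H = H$ is a subgroup and the parity argument gives $3\cdot 0 = 0 = \chi(N)$, which does \emph{not} produce a contradiction, so I would instead take $\cS$ to be the nontrivial coset $G \setminus H$, whose elements satisfy $\chi \equiv 1$, restoring the $3\cdot 1 = 1 \neq 0 = \chi(N)$ contradiction.
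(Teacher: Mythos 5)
Your approach is essentially the paper's: both arguments take an index-two subgroup $H$ of the group $G$ of Theorem~\ref{group} and choose $\cS$ to be whichever of the two cosets cannot contain a triple summing to $N$. Your homomorphism $\chi\colon G\to \bZ/2$ is just the quotient map $G\to G/H$, and your displayed rule $\cS=\{P: \chi(P)=\chi(N)+1\}$ reproduces the paper's two cases uniformly: it gives $\cS=G\setminus H$ when $N\in H$ and $\cS=H$ when $N\notin H$, exactly as in the paper. The parity computation $3(\chi(N)+1)=\chi(N)+1\neq\chi(N)$ in $\bZ/2$ is correct and is the whole argument; your justification of the existence of $H$ via the nontriviality of $G/2G$ is fine.

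There is, however, an internal inconsistency you should fix before writing this up: your step (3) and your closing paragraph say to take the ``naive coset $N\oplus H$'' and treat $N\in H$ as the only case needing correction. But $N\oplus H$ is the coset \emph{containing} $N$, and when $N\notin H$ that choice fails: if $P,Q\in N\oplus H$ then $R$ with $P\oplus Q\oplus R=N$ satisfies $\chi(R)=\chi(N)-\chi(P)-\chi(Q)=1-1-1=1$, so $R$ also lies in $N\oplus H$ and the coset does contain collinear triples. The correct choice in that case is $H$ itself, which is what your formula $\chi(P)=\chi(N)+1=0$ actually prescribes and what the paper does. So keep the displayed formula, discard the ``coset $N\oplus H$'' phrasing, and the proof coincides with the paper's.
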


\begin{proof}
Let $G$ be the abelian group obtained from $\cC$, using the general
construction of Section \ref{grouplaw} and the notation there. Then, from
Theorem~\ref{group}, three points on $\cC$ are collinear if
and only if they add up to $N$. Let $H$ be the subgroup of index 2 in $G$.
There is another coset $K$ of $H$ in $G$ so that $G$ is the disjoint union of
$H$ and $K$. There are two cases:
\begin{enumerate}[(i)]
\item $N$ lies in $H$;
\item $N$ lies in $K$.
\end{enumerate}

In case (i), take $\cS$ to be the set $K$. Suppose that $P,Q,R$ are in
$K$. Then $P\oplus Q$ must be in $H$ so that $P\oplus Q\oplus R$ must be in
$K$. In particular, $P\oplus Q\oplus R$ cannot be equal to $N$, which is in
$H$. Therefore $\cS$ is a set of $k/2$ points with no 3 collinear.

In case (ii), take $\cS$ to be the set $H$. Let $P,Q,R$ be any 3 points
of $\cS$. Since $H$ is a subgroup, $P\oplus Q\oplus R$ is in $H$. In
particular, $P\oplus Q\oplus R$ cannot equal $N$ since $N$ is in $K$. Thus
$\cS$ is a set of $k/2$ points, with no 3 collinear.
\end{proof}

\begin{remark}  %4.3
It is  possible  that such sets $\cS$  are not maximal when
considered as sets of points in $\PG(2,q)$ with  no 3 collinear.
\end{remark}

So far, sets of points with no three collinear have been considered. As any two
points define a unique line, the next step is to try to find a result
analogous to Theorem \ref{curvecaps} for higher degree curves.

Theorem~\ref{curvecaps} can be generalised as follows.

\begin{theorem}\label{general} 
Let $\cC$ be a non-singular cubic curve in $\PG(2,q)$ containing exactly $n$
points and with a cyclic group structure. Suppose the integer $r$ divides $n$.
Then there exists a set $\cS$ of $n/r$ points on $\cC$ satisfying the
following condition$:$ no $3k$ points of $\cS$ lie on any curve of degree $k$
other than $\cC$ whenever $1 \leq k < \lceil r/3 \rceil$.
\end{theorem}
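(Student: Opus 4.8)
The plan is to take $\cS$ to be a single, carefully chosen coset of the index-$r$ subgroup of $G$, and to convert the geometric non-incidence condition into an arithmetic one by means of Theorem~\ref{thm2.4}(ii). Since $G$ is cyclic of order $n$, I would identify it with $\mathbb{Z}/n$, write its law additively with identity $O$, and let $H \leq G$ be the unique subgroup of index $r$, so that $|H| = n/r$ and $G/H \cong \mathbb{Z}/r$. I would then set $\cS = a \oplus H$ for a representative $a$ to be pinned down later; this already forces $|\cS| = n/r$. The key reduction is that, by Theorem~\ref{thm2.4}(ii), if $3k$ distinct points of $\cC(K)'$ lie on a curve of degree $k$ other than $\cC$, then their sum equals $kN$. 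Hence it suffices to choose $a$ so that no $3k$ distinct points of $\cS$ sum to $kN$, for every $k$ with $1 \leq k < \lceil r/3 \rceil$.

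Next I would reduce this to one congruence. If $P_1, \dots, P_{3k}$ are any $3k$ points of $\cS = a \oplus H$, then each $P_i = a \oplus h_i$ with $h_i \in H$, so
\[
P_1 \oplus \cdots \oplus P_{3k} = 3k\,a \oplus (h_1 \oplus \cdots \oplus h_{3k}) \in 3k\,a \oplus H,
\]
regardless of which points are chosen, since $H$ is a subgroup. Thus no such sum can equal $kN$ as soon as $kN \notin 3k\,a \oplus H$. Passing to $G/H \cong \mathbb{Z}/r$ and writing $\bar N, \bar a$ for the images of $N, a$, this condition becomes $k(\bar N - 3\bar a) \neq 0$ in $\mathbb{Z}/r$. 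So the whole theorem comes down to choosing $\bar a \in \mathbb{Z}/r$ so that $d := \bar N - 3\bar a$ has order at least $\lceil r/3 \rceil$ in $\mathbb{Z}/r$, i.e.\ $kd \neq 0$ for all $1 \leq k < \lceil r/3 \rceil$.

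The heart of the argument, and the step I expect to be most delicate, is this number-theoretic selection of $d$, because as $\bar a$ varies $d$ only ranges over the coset $\bar N + 3(\mathbb{Z}/r)$, so divisibility by $3$ must be treated separately. I would split into three cases. If $3 \nmid r$, then $3$ is invertible modulo $r$, so $d$ can be made any residue, in particular a unit, giving $\mathrm{ord}(d) = r \geq \lceil r/3 \rceil$. If $3 \mid r$ but $3 \nmid \bar N$, then $d \equiv \bar N \not\equiv 0 \pmod 3$ is automatic, while for each prime $p \mid r$ with $p \neq 3$ one can impose $\bar a \not\equiv 3^{-1}\bar N \pmod p$; combining these by the Chinese Remainder Theorem yields $\gcd(r,d) = 1$ and again $\mathrm{ord}(d) = r$. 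Finally, if $3 \mid r$ and $3 \mid \bar N$, then $3 \mid d$ is forced and $\gcd(r,d) \geq 3$; writing $\bar N = 3\bar N'$ and taking $\bar a = \bar N' - 1$ gives $d = 3$, so $\gcd(r,d) = 3$ and $\mathrm{ord}(d) = r/3 = \lceil r/3 \rceil$ (using $3 \mid r$). In every case $\mathrm{ord}(d) \geq \lceil r/3 \rceil$, as required.

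Putting the three steps together, with the coset chosen as above we have $kN \notin 3k\,a \oplus H$ for every $k$ in the stated range, so by the displayed coset computation no $3k$ points of $\cS$ sum to $kN$, and hence by Theorem~\ref{thm2.4}(ii) none of them lies on a degree-$k$ curve other than $\cC$. I would also remark that the bound $\lceil r/3 \rceil$ is exactly the guarantee forced by the worst case $3 \mid r,\ 3 \mid \bar N$, where $\mathrm{ord}(d)$ can be no larger than $r/3$; in the other two cases one in fact obtains the far stronger range $k < r$, and specialising to $r = 2$ (where the choice $\bar a \neq \bar N$ recovers the two cases of Theorem~\ref{curvecaps}) reproduces the no-three-collinear conclusion there, which the uniform bound $\lceil r/3 \rceil$ does not by itself capture.
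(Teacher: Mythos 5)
Your proposal is correct and follows essentially the same route as the paper's proof: take $\cS$ to be a suitably chosen coset of the index-$r$ subgroup, reduce via Theorem~\ref{thm2.4}(ii) to the congruence $k(\bar N - 3\bar a)\not\equiv 0\pmod r$, and split into the same three cases according to whether $3$ divides $r$ and the class of $N$ modulo the subgroup. The only cosmetic difference is that in the middle case the paper picks the representative so that $3i-j=\pm1$ exactly, where you invoke the Chinese Remainder Theorem; both yield a unit modulo $r$.
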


\begin{proof}
The group $G$ has a subgroup $H$ of order $n/r$ and index $r$.
The cosets of $H$ are denoted by $H=H_0$, $H_1, H_2,\dots H_{r-1}$, where
$H_i \oplus  H_j = H_{i + j \pmod r}$.  

  Let $H_j$ be the coset containing the point $N$. Take $\cS = H_i$ where $i$
  is to be determined. Choose any $3k$ points $P_1,P_2,\dots, P_{3k}$ in
  $\cS$. Their sum lies in the coset $3k H_1 = H_{3k \pmod r}$. Using Theorem
  \ref{thm2.4}, $\cS$ has the desired property if the sum of these $3k$ points
  is always different from $kN$. This will follow from showing that the cosets
  $H_{3ki \pmod r}$ and  $H_{kj \pmod r}$ are unequal; that is 
  $k(3i-j) \not\equiv 0 \pmod r$.

 There are three cases to consider:
 \begin{enumerate}[(i)]
 \item 3 does not divide $r$;
 \item 3 divides $r$ but 3 does not divide $j$;
 \item 3 divides both $r$ and $j$.
 \end{enumerate}

 In case (i), 3 does not divide $r$. Then 3 has a multiplicative inverse $u$
 modulo $r$. Put $i =uj + u$. Then $3i-j \equiv 1 \pmod r$. Thus, if $k(3i-j)
 \equiv 0 \pmod r$, then $k \equiv 0 \pmod r$. The hypotheses imply that $0 < k
 < r$. So $\cS=H_i$ has the desired property.
 
 In case (ii), if $j \equiv 2 \pmod 3$ then put $i = (j+1)/3$. If $j \equiv 1
 \pmod 3$ then put $i=(j-1)/3$. Thus $3i-j$ is either 1 or $-1$ modulo $r$.
 Hence, if $k(3i-j) \equiv 0 \pmod r$, then $k \equiv 0 \pmod r$. As in case
 (i), this implies that $\cS$ has the required property.
 
 In case (iii), put $i=j/3+1$. Then $3i-j=3$. If $k(3i-j) \equiv 0 \pmod
 r$ then $3k \equiv 0 \pmod r$. But this contradicts the assumed bounds on
 $k$. Again this implies that $\cS$ has the desired property. 
  \end{proof}

\begin{remark}
    In this proof, only the fact that $G/H$ is cyclic is used. Strictly
    speaking, the assumption that $G$ is cyclic can weakened to merely
    assuming that $G/H$ is cyclic.
\end{remark}

\begin{remark}
  In the statement of the theorem, the restriction that $k < \lceil r/3
  \rceil$ is only required in case (iii). In the other two cases, it is
  sufficient to assume that $k < r$.
\end{remark}

\begin{remark}
Concerning the sets constructed in Theorem~\ref{curvecaps}, Voloch
\cite{V2} has shown that, in many cases, they cannot be extended to larger
sets with no 3 points collinear. In \cite{AB}, it is shown that these results
of Voloch can be strengthened and generalised. 
\end{remark}

\thanks{The research of the first author is supported by grants from NSERC.
The research of the third author is supported by grants from 
ARP and NSERC.}

%%%%%%%%%%%%%%%%%%%%%%%%%%%%%%%%%%%%%%%%%%%%%%%%%%%%%%%%%%%%%%%%%%%%%%%%%
%\ifx\undefined\bysame
%\newcommand{\bysame}{\leavevmode\hbox to3em{\hrulefill}\,}
%\fi

%\cite{AB}

\begin{tabular}{lll}
A.A. Bruen & J.W.P. Hirschfeld & D.L. Wehlau \\
Department of Electrical  & Department of & Department of Mathematics  \\
and Computer  Engineering & Mathematics & and Computer Science \\
University of Calgary & University of Sussex & Royal Military College \\
Calgary & Brighton & Kingston \\ 
Alberta T2N 1N4 & East Sussex BN1 9RF & Ontario  K7K 7B4  \\ 
Canada & United Kingdom & Canada\\
{\tt bruen@ucalgary.ca} & {\tt jwph@sussex.ac.uk} & {\tt wehlau@rmc.ca}
\end{tabular}
\end{document}